\newtheorem{theorem}{Theorem}[section]
\newtheorem{lemma}[theorem]{Lemma}
\theoremstyle{definition}
\newtheorem{remark}[theorem]{Remark}
\renewcommand{\P}{{\mathbb{P}}}
\newcommand{\Z}{{\mathbb{Z}}}
\newcommand{\C}{{\mathbb{C}}}
\newcommand{\Sb}{{\mathbb{S}}}
\newcommand{\Fc}{{\mathcal{F}}}
\newcommand{\Gc}{{\mathcal{G}}}
\newcommand{\Oc}{{\mathcal{O}}}
\newcommand{\ed}{\widehat{E}}
\newcommand\res[1]{{\lower1pt\hbox{$|$}}_{\raise.5pt\hbox{${\scriptstyle #1}$}}}
\numberwithin{equation}{section}
\begin{document}

\title{Tate Resolutions for Segre Embeddings}

\author{David A.\ Cox}
\address{Department of Mathematics and Computer Science, Amherst
College, Amherst, MA 01002-5000, USA}
\email{dac@cs.amherst.edu}

\author{Evgeny Materov}
\address{Department of Mathematics and Statistics, University of
  Massachusetts, Amherst, MA 01003-9305, USA}
\email{materov@math.umass.edu}

\keywords{Tate resolution, Segre embedding, toric Jacobian}

\begin{abstract}
We give an explicit description of the terms and differentials of the
Tate resolution of sheaves arising from Segre embeddings of
$\P^a\times\P^b$.  We prove that the maps in this Tate resolution are
either coming from Sylvester-type maps, or from Bezout-type maps
arising from the so-called toric Jacobian. 
\end{abstract}

\date{\today}

\maketitle

\section{Introduction}

Let $V$ and $W$ be dual vector spaces of dimension $N + 1$ over a
field $K$ of characteristic $0$.  It is known that there is a relation
between complexes of free graded modules over the exterior algebra
$E = \bigwedge V$ and coherent sheaves on projective space $\P(W)$.
More precisely, the Bernstein-Gel$\!$\'{}$\!$fand-Gel$\!$\'{}$\!$fand
(BGG) correspondence introduced in \cite{BGG} establishes an
equivalence between the derived category of bounded complexes of
coherent sheaves on $\P(W)$ and the stable category of complexes of
finitely generated graded modules over $E$.  The essential part of
this correspondence is given via the \emph{Tate resolutions}, namely
for any coherent sheaf $\Fc$ on $\P(W)$ there exists a bi-infinite
exact sequence
\[
  T^{\bullet}(\Fc) :\,
  \cdots  \longrightarrow
  T^{-1}(\Fc) \longrightarrow T^{0}(\Fc) \longrightarrow
  T^{1}(\Fc)  \longrightarrow \cdots
\]
of free graded $E$-modules.  The terms of Tate resolution were
described explicitly by Eisenbud, Fl{\o}ystad and Schreyer \cite{EFS}
in the form
\begin{eqnarray*}
  T^p (\Fc) = {\textstyle \bigoplus_{i}}\,
  \ed(i-p)\otimes H^i(\P(W),\Fc(p-i)),
\end{eqnarray*}
where $\ed = \omega_E = \mathrm{Hom}_K(E,K)= \bigwedge W$ as an
$E$-module.

While the terms of Tate resolutions are described explicitly, the maps
are much more difficult to describe. The knowledge of the maps give
us, for example, an opportunity to compute generalized resultants
(see, e.g., \cite{ES} or \cite{khetan1, khetan2}).

In \cite{Cox_bez} Cox found an explicit construction of the Tate
resolution for the $d$-fold Veronese embedding
\[
\nu_d : \P^n\rightarrow \P^{\binom{n+d}{d}-1}
\]
of $\P^n$ when $\Fc = \nu_{d*} \Oc_{\P^n}(k)$. The construction of
differentials in Tate resolution involves the Bezoutian of $n+1$
homogeneous polynomials of degree $d$ in $n+1$ variables. In this
paper, we find a similar description of the Tate resolution arising
from the Segre embedding
\[
  \nu:\P^a\times\P^b\rightarrow \P^{ab + a + b}
\]
of the sheaf $\nu_{*} \Oc_{\P^a\times\P^b}(k,l)$. The shape of the
Tate resolution depends only on the pair $(k,l)$ and there are three
types of possible resolutions:
\begin{align*}
  &\text{Type 1:} \ -a \le k - l \le b\\
  &\text{Type 2:} \ k - l > b\\
  &\text{Type 3:} \  k - l < -a.
\end{align*}
We prove that Type 1 maps involve the toric Jacobian of a sequence
bilinear forms $f_0,\ldots,f_{a+b}$ in $x_0,\ldots,x_a$,
$y_0,\ldots,y_b$ given by
\[
f_j = \sum_{i,k} a_{ijk}\, x_i\, y_k,\quad 0\le j\le a+b.
\]
This result resembles the Bezout-type formulas for hyperdeterminants
of a three-dimensional matrix $A = (a_{ijk})$ discussed in
\cite[Chapter 14, Theorem 3.19]{GKZ}.  The resolutions of Type 2 and 3
are similar to each other and both arise from the Sylvester forms of
$f_0,\ldots,f_{a+b}$.  Notice that similar formulas appear in the
study of Bondal type formulas for hyperdeterminants of $A$ (see
\cite[Chapter 14, Theorem 3.18]{GKZ}).

The situations considered in this paper and \cite{Cox_bez} are special
cases when $\Fc$ is a push-forward of ${\mathcal L} =
\Oc(m_1,\ldots,m_r)$ in the projective embedding
\[
\nu : \P^{l_1}\times\cdots\times\P^{l_r}\rightarrow 
\P(S^{d_1} K^{l_1 + 1}\otimes\cdots\otimes S^{d_r} K^{l_r + 1})
\]
which is a combination of Segre and Veronese embeddings. This case
will be studied in a different paper \cite{Cox_Mat}.  We conjecture
that the maps in the Tate resolutions are essentially the same as in
Weyman-Zelevinsky complexes studied in \cite{WZ} or the same as in the
resultant spectral sequences from \cite[Chapter 4, Section 3]{GKZ}.

Here is the outline of our paper.  In Section~2 we give a definition
of the Tate resolution and explain its basic properties.  The main
parts of the paper are Sections~3 and 4. In Section~3 we describe the
terms of Tate resolution arising from Segre embeddings of sheaves on
products of projective spaces, and in Section~4 we find explicit forms
for corresponding differentials.

\section{Basic Definitions and Properties of Tate Resolutions}

\subsection{Graded exterior algebras}
Given $V$ and $W$ as above, the algebras $S = \mathrm{Sym}\,W$ and $E
= \bigwedge V$ are graded by the following convention: $\deg(e_i) = 1$
for a basis $e_0, e_1,\ldots, e_N$ of $W$ and $\deg(e_i^*) = -1$ for
the dual basis $e_0^*, e_1^*,\ldots, e_N^*$ of $V$, so that $E_{-i} =
\bigwedge^i V$.  Define $E(p)$ as the graded $E$-module with $E(p)_q =
E_{p+q}$.  Then any free graded $E$-module is an $E$-module of the
form
\[
  M = {\textstyle \bigoplus_i}\, E(-i)\otimes V_i,
\]
where $V_i$ is a finite dimensional $K$-vector space with $V_i = \{0
\}$ for almost all $i$.  Note that $V_i$ gives the degree $i$ generators
of $M$, because $(E(-i)\otimes V_i)_i = E(-i)_i\otimes V_i =
E_0\otimes V_i = V_i$.

The dual to $E$ algebra $\widehat{E} = \omega_E = \mathrm{Hom}_K(E,
K)$ is a left $E$-module with the graded components $\widehat{E}_i =
\mathrm{Hom}_K(E_{-i}, K) = \mathrm{Hom}_K(\bigwedge^i V, K)$.  The
perfect pairing
\[
  {\textstyle\bigwedge}^i V \times{\textstyle\bigwedge}^i W
  \longrightarrow K
\]
implies $\widehat{E}_i = \bigwedge^i W$ and $\widehat{E} = \bigwedge
W$.  Moreover, $\widehat{E}$ is Gorenstein, i.e., $\widehat{E}$ is
isomorphic to $E$ with a shift in grading.  Namely, the isomorphism
$\bigwedge^i V\otimes\bigwedge^{N+1} W\rightarrow
\bigwedge^{N+1-i} W$ implies
\[
  \widehat{E} = E(-N-1)\otimes {\textstyle\bigwedge}^{N+1}W,
\]
and therefore $\widehat{E}\cong E(-N-1)$ (non-canonically) via a map
$\bigwedge^{N+1}W\cong K$.  For later purposes, we note the canonical
isomorphism
\begin{equation}
\label{canonical}
  \mathrm{Hom}_E(\widehat{E}(p)\otimes A,\widehat{E}(q)\otimes B)_0
  \simeq \mathrm{Hom}_K({\textstyle\bigwedge}^{p-q}W \otimes A,B),
\end{equation}
where the subscript ``$0$'' denotes graded homomorphisms of degree zero.

\subsection{Tate resolutions}
\label{tateres}
By \cite{EFS} or \cite{Floystad_KD}) a coherent sheaf $\Fc$ on $\P(W)$
determines a \emph{Tate resolution} $T^\bullet(\Fc)$, which is an
(unbounded) acyclic complex
\[
  T^{\bullet}(\Fc) :\,
  \cdots  \longrightarrow
  T^{-1}(\Fc) \longrightarrow T^{0}(\Fc) \longrightarrow
  T^{1}(\Fc)  \longrightarrow \cdots
\]
of free graded $E$-modules with the terms
\[
  T^p(\Fc) = {\textstyle \bigoplus_i}\,
  \widehat{E}(i-p)\otimes
  H^i(\P(W), \Fc(p-i)).
\]

For example, in degree $k$ we have
\begin{equation}
\label{graded_terms}
  T^p(\Fc)_k =
  {\textstyle \bigoplus_i}\,{\textstyle\bigwedge}^{i-p+k}W\otimes
  H^i(\P(W), \Fc(p-i))
\end{equation}
since $\widehat{E}(i-p)_k = \widehat{E}_{i-p+k} = \bigwedge^{i-p+k}
W$.  The Tate resolution is defined by each differential
$d^p:T^p(\Fc)\rightarrow T^{p+1}(\Fc)$ since $T^{\ge p}(\Fc)$ is a
minimal injective resolution of $\mathrm{ker}(d^p)$ and $T^{<p}(\Fc)$
is a minimal projective resolution of $\mathrm{ker}(d^p)$
\cite{Eisenbud_syz}.

When the context is clear, we will write $H^i(\Fc(j))$ instead of
$H^i(\P(W),\Fc(j))$.

\begin{lemma} For fixed $k$, $T^p(\Fc)_k= 0$ if either
$p > k + m$ or $p < k -N - 1$, where $m = \dim(\mathrm{supp}(\Fc))$.
\end{lemma}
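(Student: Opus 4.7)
The plan is to read off the vanishing directly from the explicit description of $T^p(\Fc)_k$ given in equation (\ref{graded_terms}). A summand indexed by $i$ contributes only if both tensor factors are nonzero, so I would translate each factor's nonvanishing into an inequality on $i$ and then intersect the resulting ranges.

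First I would note that $\dim W = N+1$ forces $\bigwedge^{i-p+k} W = 0$ unless $0 \le i-p+k \le N+1$, i.e., $p-k \le i \le p-k+N+1$. Second, since $\Fc$ is a coherent sheaf on $\P(W)$ whose support has dimension $m$, the Grothendieck vanishing theorem (applied on $\mathrm{supp}(\Fc)$, using that sheaf cohomology equals cohomology of the restriction to the support) gives $H^i(\P(W),\Fc(j)) = 0$ unless $0 \le i \le m$, for every twist $j$.

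Putting the two ranges together, the direct sum in (\ref{graded_terms}) is effectively indexed by $i \in [\max(0,p-k),\,\min(m,p-k+N+1)]$. This interval is empty exactly when $p - k > m$ or $p - k + N + 1 < 0$, which give the claimed conditions $p > k + m$ and $p < k - N - 1$, respectively. Hence $T^p(\Fc)_k = 0$ in both cases.

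There is no real obstacle here; the only subtle point is using the support-dimension bound $m$ in place of the cruder bound $N$, which is what produces the sharper upper cutoff $p \le k + m$ rather than $p \le k + N$.
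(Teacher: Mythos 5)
Your proof is correct and follows essentially the same route as the paper's: both arguments combine the vanishing $H^i(\Fc(\cdot)) = 0$ for $i \notin [0,m]$ (the support-dimension bound) with the vanishing $\bigwedge^{i-p+k}W = 0$ for $i-p+k \notin [0,N+1]$, and conclude that the two ranges of valid $i$ fail to overlap under the stated hypotheses. The paper simply phrases this as two short chains of inequalities rather than as an empty interval of indices.
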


\begin{proof} Since $H^i(\Fc(p-i)) = 0$ if $i<0$ or $i> m$, we may
assume $0\le i\le m$.  Then the inequalities $k +m < p$, $i\le m$
easily imply
\[
  i - p + k \le m - p + k < -p+p = 0,
\]
so that $\bigwedge^{i - p + k} W = 0$.  Analogously, if $k -N - 1 >
p$, $i \ge 0$, then
\[
  i - p + k \ge -p + k > -p + p + N + 1 = N+1,
\]
so that we again have $\bigwedge^{i - p + k} W = 0$.
\end{proof}

\begin{lemma}
\label{map_ij}
If $i < j$, then the map
\[
  d^p_{i,j}:
  \widehat{E}(i-p)\otimes H^i(\Fc(p-i))\longrightarrow
  \widehat{E}(j-p-1)\otimes H^j(\Fc(p+1-j))
\]
from the $i$th summand of $T^p(\Fc)$ to the $j$th summand of
$T^{p+1}(\Fc)$ is zero.
\end{lemma}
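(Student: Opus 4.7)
The plan is to use the canonical isomorphism (\ref{canonical}) to identify the space of degree-zero $E$-module maps between the two summands, and then dispatch the case $j \ge i+2$ by a dimension count and the case $j = i+1$ by minimality of the Tate resolution.

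First, I would apply (\ref{canonical}) with source shift $i-p$, target shift $j-p-1$, and coefficient spaces $A = H^i(\Fc(p-i))$, $B = H^j(\Fc(p+1-j))$. Since $(i-p)-(j-p-1) = i-j+1$, the isomorphism reads
\[
\mathrm{Hom}_E\bigl(\widehat{E}(i-p)\otimes A,\,\widehat{E}(j-p-1)\otimes B\bigr)_0 \simeq \mathrm{Hom}_K\bigl({\textstyle\bigwedge}^{i-j+1}W\otimes A,\,B\bigr),
\]
and the component $d^p_{i,j}$ of the differential lives in the left-hand side.

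When $j \ge i+2$, the exponent $i-j+1$ is at most $-1$, so $\bigwedge^{i-j+1}W = 0$, the Hom space vanishes, and $d^p_{i,j} = 0$ automatically.

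The main obstacle is the borderline case $j = i+1$: here $\bigwedge^0 W = K$, and the right-hand side reduces to $\mathrm{Hom}_K(H^i(\Fc(p-i)), H^{i+1}(\Fc(p-i)))$, which is typically nonzero. My plan here is to invoke the minimality of the Tate resolution recalled in Section~\ref{tateres}: since source and target share the identical $\widehat{E}$-shift $i-p$, the component $d^p_{i,i+1}$ must take the form $\mathrm{id}_{\widehat{E}(i-p)} \otimes \phi$ for some $K$-linear map $\phi \colon H^i(\Fc(p-i)) \to H^{i+1}(\Fc(p-i))$. Reducing $\widehat{E}(i-p)$ modulo the graded maximal ideal of $E$ collapses it onto its top generator, so this component becomes exactly $\phi$; but the minimality of $T^{\ge p}(\Fc)$ and $T^{<p}(\Fc)$ as injective (resp.\ projective) resolutions of $\mathrm{ker}(d^p)$ forces the reduction of $d^p$ modulo the maximal ideal to vanish. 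Hence $\phi = 0$, and so $d^p_{i,i+1} = 0$.
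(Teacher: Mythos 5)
Your proof is correct and follows essentially the same route as the paper: apply the canonical isomorphism \eqref{canonical} to reduce to $\mathrm{Hom}_K(\bigwedge^{i-j+1}W\otimes A, B)$, observe that this vanishes when $j>i+1$, and invoke minimality of the Tate resolution to kill the remaining constant component $d^p_{i,i+1}$. Your spelling-out of the $j=i+1$ case (as $\mathrm{id}\otimes\phi$ and reduction modulo the maximal ideal) is just a more explicit version of the paper's one-line appeal to minimality.
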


\begin{proof}
Let $A = H^i(\Fc(p-i))$ and $B = H^j(\Fc(p+1-j))$.  By
\eqref{canonical}, $d^p_{i,j}$ lies in
\[
  \mathrm{Hom}_E(\widehat{E}(i-p)\otimes A,
  \widehat{E}(j-p-1)\otimes B)_0 \simeq
  \mathrm{Hom}_K({\textstyle\bigwedge}^{i-j+1}W\otimes A,B).
\]
It follows that $d^p_{i,j} = 0$ when $i+1 < j$ and that $d^p_{i,i+1}$
is constant.  Then minimality implies that $d^p_{i,i+1} = 0$.
\end{proof}

Finding an explicit expression for differentials
$d^p:T^{p}(\Fc)\rightarrow T^{p+1}(\Fc)$ seems to be a difficult
problem.  By Lemma~\ref{map_ij}, the general maps from the $i$th
summand of $T^{p}(\Fc)$ in the Tate resolution $T^\bullet(\Fc)$ have
the form
\[
  \widehat{E}(i-p)\otimes H^i(\Fc(p-i))\longrightarrow
  {\textstyle\bigoplus_{j\ge 0}}\,
  \widehat{E}(i-j - p-1)\otimes H^{i-j}(\Fc(p+1-i+j)).
\]
The ``horizontal'' component of this map is explicitly known:
\begin{align*}
  \widehat{E}(i-p)\otimes H^i(\Fc(p-i)) &\longrightarrow
  \widehat{E}(i-p-1)\otimes H^i(\Fc(p+1-i)) \\
  f\otimes m &\longmapsto {\textstyle\sum_i}\,
  f e_i^*\otimes e_i m.
\end{align*}
By \eqref{canonical}, this corresponds to the multiplication map
\[
  W\otimes H^i(\Fc(p-i))\longrightarrow H^i(\Fc(p+1-i)).
\]
One of the main results of this paper is an explicit description the
entire differential $d^p :T^{p}(\Fc)\rightarrow T^{p+1}(\Fc)$ in some
special situations.

\section{Tate Resolutions for Segre Embeddings of $\P^a\times \P^b$}
\label{paxpb}

Let $X = \P^a\times\P^b$, with
coordinate ring $S = K[\mathbf{x},\mathbf{y}]$ for variables $\mathbf{x} =
(x_0,\dots,x_a)$, $\mathbf{y} = (y_0,\dots,y_b)$.  The ring $S$ has a
natural bigrading where the $\mathbf{x}$ variables have degree $(1,0)$
and the $\mathbf{y}$ variables have degree $(0,1)$.  The graded piece
of $S$ in degree $s,t$ will be denoted $S_{s,t}$.  Set
\[
  W = H^0(X,\Oc_X(1,1)) = S_{1,1}
\]
and let
\[
  \nu : X = \P^a\times\P^b \longrightarrow \P(W) \simeq \P^{ab+a+b}
\]
be the Segre embedding.  The sheaf
\begin{equation}
\label{defineF}
  \Fc = \nu_* \Oc_X(k,l)
\end{equation}
has Tate resolution $T^{\bullet}(\Fc)$ with
\begin{equation}
\label{tlmp}
\begin{aligned}
  T^p(\Fc) &= {\textstyle\bigoplus_{i}}\,\widehat{E}(i-p)\otimes
  H^i(\Fc(p-i))\\ &=
  {\textstyle\bigoplus_{i}}\,\widehat{E}(i-p)\otimes H^i(X,\Oc_X(k+
  p-i,l+p-i)).
\end{aligned}
\end{equation}
In general, we say that the summand $\widehat{E}(i-p)\otimes
H^i(\Fc(p-i))$ of $T^p(\Fc)$ has \emph{cohomological level} $i$.
Since
\[
  H^i(X,\Oc_X(k+ p-i,l+p-i)) = 0\ \text{for}\ i \notin \{0,a,b,a+b\},
\]
we see that $T^p(\Fc)$ has at most four nonzero cohomological levels.

In Section~\ref{tateres}, we observed that the ``horizontal''
components of the differntials $d^p:T^{p}(\Fc) \to T^{p+1}(\Fc)$ are
explicitly known.  The main result of this paper is a description of
the ``diagonal'' components of these maps.

\subsection{Regularity}

We recall that a coherent sheaf $\Fc$ is called $m$-\emph{regular} if
\[
  H^i(\Fc(m - i)) = 0, \quad \text{for all}\ i>0.
\]
If $\Fc$ is $m$-regular, then it is known that it is also
$(m+1)$-regular.  The \emph{regularity} of $\Fc$, denoted
$\mathrm{reg}(\Fc)$, is the unique integer $m$ such that $\Fc$ is
$m$-regular, but not $(m-1)$-regular.  It follows from
the definition of regularity if $m = \mathrm{reg}(\Fc)$, then 
\[
  T^p(\Fc) = \widehat{E}(-p)\otimes H^0(\Fc(p)),\quad p \ge m,
\]
and the Tate resolution has the form:
\begin{equation*}
  \cdots \longrightarrow
  T^{m-2}(\Fc)\longrightarrow
  T^{m-1}(\Fc)\longrightarrow
  \widehat{E}(-m)\otimes H^0(\Fc(m))
  \longrightarrow
   \cdots.
\end{equation*}

We now compute the regularity of the sheaf $\Fc$ defined in
\eqref{defineF}.

\begin{lemma}
\label{reglemma}
$\mathrm{reg}(\Fc) = \max\big\{\!-\min\{k,l\},\min\{b-k,a-l\}\big\}$.
\end{lemma}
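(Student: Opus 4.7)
The plan is to reduce the $m$-regularity condition to a small set of linear inequalities in $m$ via the K\"unneth formula, and then identify the smallest $m$ satisfying them.

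Since $\nu$ is a closed embedding we have $H^i(\P(W),\Fc(j)) = H^i(X,\Oc_X(k+j,l+j))$, and K\"unneth gives
\[
  H^i(X,\Oc_X(p,q)) = \bigoplus_{\alpha+\beta=i} H^\alpha(\P^a,\Oc(p))\otimes H^\beta(\P^b,\Oc(q)).
\]
Because $H^\alpha(\P^n,\Oc(d))$ is nonzero only for $(\alpha,d)$ with $\alpha=0,\,d\ge 0$ or $\alpha=n,\,d\le -n-1$, only the cohomological levels $i\in\{0,a,b,a+b\}$ can contribute, and $m$-regularity amounts to vanishing at $i=a,b,a+b$.

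Substituting $j=m-i$ into each nonvanishing K\"unneth term produces three conditions on $m$: (A) at $i=a$, vanishing holds iff $m\ge -k$ or $m\le a-l-1$; (B) at $i=b$, vanishing holds iff $m\ge -l$ or $m\le b-k-1$; and (C) at $i=a+b$, vanishing holds iff $m\ge \mu$, where $\mu:=\min\{b-k,a-l\}$. Writing $M:=\max\{-k,-l\}=-\min\{k,l\}$, I claim that $\max\{M,\mu\}$ is the smallest $m$ satisfying (A), (B), (C) simultaneously, which is exactly the formula in the lemma.

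That $m=\max\{M,\mu\}$ satisfies all three conditions is immediate: (C) is automatic from $m\ge\mu$, and both $m\ge -k$ and $m\ge -l$ follow from $m\ge M$, giving (A) and (B). For minimality I would split into two cases. If $\mu\ge M$, then $\max\{M,\mu\}=\mu$ and (C) fails at $m=\mu-1$, so $\mu$ is smallest. If $\mu<M$, then by the obvious $(k,a)\leftrightarrow(l,b)$ symmetry one may assume $M=-k$; since $b-k\ge -k$ always, the hypothesis $\mu<-k$ forces $\mu=a-l$ and hence $l-k\ge a+1$, which is exactly what is needed to make condition (A) fail at $m=-k-1$. The only real work is this last case split; everything else is K\"unneth plus the vanishing theorem for line bundles on projective space, so the main obstacle is merely keeping the case analysis organized.
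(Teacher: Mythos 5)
Your proof is correct and follows essentially the same route as the paper: K\"unneth reduces the regularity condition to the vanishing of $H^i$ for $i \in \{a,b,a+b\}$, each giving a linear condition on $m$, with sufficiency being routine and minimality handled by a two-case split according to whether $\mu = \min\{b-k,a-l\}$ or $M = -\min\{k,l\}$ attains the max (and showing $H^{a+b} \ne 0$ or $H^a \ne 0$, respectively, at $m_0-1$). One small imprecision: your stated condition (A) for vanishing at level $i=a$ is only the full condition when $a \ne b$; when $a=b$ there is a second K\"unneth summand, so the vanishing condition at that single level is the conjunction of your (A) and (B). This does not affect the conclusion, since you in any case intersect (A), (B), (C) to determine $\mathrm{reg}(\Fc)$, but it is worth noting explicitly as the paper does.
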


\begin{proof}
Let $m_0$ denote the right-hand side of the above equation and let $m
\ge m_0$.  Then Serre duality implies
\begin{align*}
  H^{a+b}(\Fc(m-(a+b))) &= H^{a+b}(X,\Oc_X(k+m-(a+b),l+m-(a+b)))\\
  &\simeq H^0(X,\Oc_X(b-k-(m+1),a-l-(m+1)))^*.
\end{align*}
Since $m \ge m_0$ implies $m \ge b-k$ or $m \ge a-l$, we see that
$H^{a+b}(\Fc(m-(a+b))) = 0$.

Next we use the K\"unneth formula to write
\begin{align*}
  H^{a}(\Fc(m-a)) &= H^{a}(X,\Oc_X(k+m-a,l+m-a))\\
  &=H^0(\P^a,\Oc(k+m-a))\otimes H^a(\P^b,\Oc(l+m-a))\ \oplus \ \\
  &\quad\, H^a(\P^a,\Oc(k+m-a))\otimes H^0(\P^b,\Oc(l+m-a)).
\end{align*}
Since $m \ge m_0$ implies $m \ge -k$ and $m \ge -l$, we see that $k+m
-a \ge -a$, which implies $H^a(\P^a,\Oc(k+m-a) = 0$.  Furthermore,
$H^a(\P^b,\Oc(l+m-a)) = 0$ when $a \ne b$, and when $a = b$, we have
$l+m-a = l+m-b \ge -b$, which again implies $H^a(\P^b,\Oc(l+m-a)) =
0$.  Hence $H^{a}(\Fc(m-a)) = 0$, and $H^{b}(\Fc(m-b)) = 0$ is proved
similarly.

It follows that $m_0 \ge \mathrm{reg}(\Fc)$.  To prove equality, we
will let $m = m_0-1$ and show that $H^{i}(\Fc(m-i)) \ne 0$ for some $i
> 0$.  We consider two cases.

\emph{Case 1:} $m_0 = \min(b-k,a-l) \ge -\min\{k,l\}$.
This implies the inequalities
$b-k-(m+1)\ge 0$ and $a-l-(m+1)\ge 0$.  Hence
\[
H^{a+b}(\Fc(m-(a+b))) \simeq H^0(X,\Oc_X(b-k-(m+1),a-l-(m+1)))^* \ne
0.
\]

\emph{Case 2:} $m_0 = -\min(k,l) > \min(b-k,a-l)$.  If $m_0 = -k$,
then $k+m-a = -a-1$, so that $H^a(\P^a,\Oc(k+m-a)) \ne 0$.  We also
have $m_0 > \min\{b-k,a-l\}$, so that $m_0 > b-k$ or $m_0 > a-l$.  The
former is impossible since $m_0 = -k$, and then the latter implies
$l+m-a \ge 0$, so that $H^0(\P^b,\Oc(l+m-a)) \ne 0$.  By K\"unneth,
\[
  0 \ne H^a(\P^a,\Oc(k+m-a)) \otimes H^0(\P^b,\Oc(l+m-a)) \subseteq
  H^{a}(\Fc(m-a)).
\]
The proof when $m_0 = -l$ is similar.
\end{proof}

To see what this says about the Tate resolution of $\Fc$, we define
\begin{equation}
\label{ppmdef}
\begin{aligned}
  p^+ &= \max\big\{\!-\min\{k,l\},\min\{b-k,a-l\}\big\}\\
  p^- &= \min\hskip2pt\big\{\!-\min\{k,l\},\min\{b-k,a-l\}\big\} - 1.
\end{aligned}
\end{equation}
Then we have the following result.

\begin{lemma}
\label{ppmlemma}
\[
  T^p(\Fc) = \begin{cases}
  \widehat{E}(-p) \otimes S_{k+p,l+p} & p \ge p^+\\[3pt]
  \widehat{E}(a+b-p) \otimes S_{b-k-1-p,a-l-1-p}^* & p \le p^-.
\end{cases}
\]
\end{lemma}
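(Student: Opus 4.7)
The plan is to split into the two ranges. For $p\ge p^+$ I would read the answer directly off Lemma~\ref{reglemma}, and for $p\le p^-$ I would run a K\"unneth analysis followed by Serre duality.

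In the case $p\ge p^+$, Lemma~\ref{reglemma} identifies $p^+$ with $\mathrm{reg}(\Fc)$. Since $\Fc$ is $p$-regular for every $p\ge p^+$, the vanishings $H^i(\Fc(p-i))=0$ for $i>0$ collapse the sum \eqref{tlmp} onto the single summand $\widehat{E}(-p)\otimes H^0(\Fc(p))$. The projection formula gives $\Fc(p)=\nu_*\Oc_X(k+p,l+p)$, and since $p\ge p^+\ge -\min\{k,l\}$ both twists are nonnegative, so $H^0(\Fc(p))=S_{k+p,l+p}$, as required.

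In the case $p\le p^-$, I would show that only the cohomological level $i=a+b$ contributes to $T^p(\Fc)$ and identify the corresponding $H^{a+b}$ via Serre duality. K\"unneth's formula forces $H^i(X,\Oc_X(c,d))$ to vanish outside $i\in\{0,a,b,a+b\}$, reducing the task to checking the four corners $(i_1,i_2)\in\{(0,0),(a,0),(0,b),(a,b)\}$. Applied with $(c,d)=(k+p-i,l+p-i)$, a short sign analysis shows that nonvanishing at $i=0$, $i=a$, or $i=b$ would force $p\ge-\min\{k,l\}$, $p\ge a-l$, or $p\ge b-k$, respectively; each of these is excluded by the definition of $p^-$. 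Only the $(a,b)$ corner at $i=a+b$ survives, and Serre duality yields
\[
  H^{a+b}(\Fc(p-a-b))\cong H^0\bigl(X,\Oc_X(b-k-1-p,a-l-1-p)\bigr)^*,
\]
which equals $S^*_{b-k-1-p,a-l-1-p}$ since the bound $p\le p^-\le\min\{b-k,a-l\}-1$ makes both twists nonnegative. Substituting into \eqref{tlmp} produces the claimed form of $T^p(\Fc)$.

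The main obstacle will be the K\"unneth bookkeeping in the second case. One subtle point is when $a=b$, because then both the $(0,a)$ and $(a,0)$ corners feed into $H^a$, and symmetrically for $H^b$; I would check that the extra corner imposes an analogous inequality (for instance $p\ge a-k$ at $i=a=b$), which is again ruled out by $p\le p^-\le\min\{b-k,a-l\}-1$, so the argument remains uniform across the cases. Beyond this, the proof is essentially a bookkeeping exercise built on top of Lemma~\ref{reglemma}.
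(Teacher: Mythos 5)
Your proof is correct, but for the range $p\le p^-$ it takes a genuinely different route from the paper. You vanish the lower cohomology levels directly, by running the K\"unneth decomposition for each of the corners $(i_1,i_2)\in\{(0,0),(a,0),(0,b),(a,b)\}$ and checking that $p\le p^-$ violates the inequality needed for each corner with $i<a+b$ to contribute, with a separate check for the coincidence $a=b$. The paper instead applies Serre duality once to rewrite $H^{a+b-i}(\Fc(p-(a+b-i)))$ as $H^i(\Gc(-p-i))$ for the ``dual'' sheaf $\Gc=\nu_*\Oc_X(b-k-1,a-l-1)$, and then simply invokes Lemma~\ref{reglemma} applied to $\Gc$; the condition $-p\ge\mathrm{reg}(\Gc)$ unwinds to exactly $p\le p^-$, so all the positive cohomological levels below $a+b$ die at once. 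Your version is more elementary and self-contained, at the cost of redoing the K\"unneth/Serre-duality bookkeeping that Lemma~\ref{reglemma} already encapsulates; the paper's version is shorter and exhibits the pleasant duality between the two halves of the statement (the top half is $\mathrm{reg}(\Fc)$, the bottom half is $\mathrm{reg}(\Gc)$), which is conceptually tidier even though it hides the sign analysis inside the earlier lemma.
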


\begin{proof}
The assertion for $p \ge p^+$ follows immediately from
Lemma~\ref{reglemma} and the discussion preceding the lemma. 
For $p \le p^-$, note that
\begin{align*}
  H^{a+b}(\Fc(p-(a+b))) &\simeq H^0(X,\Oc_X(b-k-(p+1),a-l-(p+1)))^*\\
  &= S_{b-k-1-p,a-k-1-p}^*
\end{align*}
and that
\begin{align*}
  H^{a+b-i}(\Fc(p-(a+b-i))) &\simeq H^i(X,\Oc_X(b-k-1-p-i,a-l-1-p-i))^*\\
  &= H^i(\Gc(-p-i)),
\end{align*}
where $\Gc = \nu_*\Oc_X(b-k-1,a-l-1)$.  Applying Lemma~\ref{reglemma}
to $\Gc$, we see that $H^i(\Gc(-p-i)) = 0$ whenever $i > 0$ and
\[
  -p \ge
   \max\big\{-\min\{b-k-1,a-l-1\},\min\{b-(b-k-1),a-(a-l-1)\}\big\},
\]
which is equivalent to $p \le p^-$.
\end{proof}

Lemma~\ref{ppmlemma} tells us that for $p^-$ and below, the Tate
resolution lives at cohomological level $a+b$, and for $p^+$ and
above, it lives at cohomological level $0$.

\subsection{The Shape of the Resolution}
\label{shape}
For $k,l \in \Z$, the Tate
resolution of $\Fc = \nu_* \Oc_X(k,l)$ on $X = \P^a\times\P^b$ has one
of the following three types:
\begin{align*}
  &\text{Type 1:} \ -a \le k - l \le b\\
  &\text{Type 2:} \ k - l > b\\
  &\text{Type 3:} \  k - l < -a.
\end{align*}
We will prove three lemmas, one for each type.

\begin{lemma}[Type 1]
\label{type1lemma}
Assume that $\Fc$ has Type 1.  Then $p^- = -\min\{k,l\}-1$
and $p^+ =
\min\{b-k,a-l\}$.  Furthermore, if $p^- < p < p^+$, then
\[
  T^p(\Fc) \ = \ \begin{matrix} \widehat{E}(a+b-p)\otimes
  S_{b-k-1-p,a-l-1-p}^*\\[5.5pt]
  \bigoplus\\[3pt]
  \widehat{E}(-p)\otimes S_{k+p,l+p}. \end{matrix}
\]
\end{lemma}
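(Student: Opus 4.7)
The proof has three parts. First, I will use the Type 1 inequality $-a \le k - l \le b$ to identify which of the two quantities appearing in \eqref{ppmdef} achieves the max and which the min, thereby obtaining the formulas for $p^+$ and $p^-$. Second, I will show that for $p^- < p < p^+$ the intermediate cohomological levels $i = a$ and $i = b$ in \eqref{tlmp} vanish. Third, the only surviving levels are $i = 0$ and $i = a+b$, and their explicit form follows from the K\"unneth formula and Serre duality, just as in the proof of Lemma~\ref{ppmlemma}.

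For the first step, I need $-\min\{k,l\} \le \min\{b-k, a-l\}$. Splitting on whether $k \le l$ or $l \le k$, the inequality $-\min\{k,l\} \le b-k$ reduces either to $b \ge 0$ or to $k - l \le b$, while $-\min\{k,l\} \le a-l$ reduces either to $k - l \ge -a$ or to $a \ge 0$. Each of these holds under Type 1 together with the standing assumption $a, b \ge 0$, so the max is $\min\{b-k, a-l\}$ and the min is $-\min\{k,l\}$, giving $p^+$ and $p^-$ as stated.

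For the second step, expand $H^a(\Fc(p-a)) = H^a(X, \Oc_X(k+p-a, l+p-a))$ by K\"unneth exactly as in the proof of Lemma~\ref{reglemma}. For any $p \le p^+ - 1$ one has $p \le a - l - 1$, so $l + p - a < 0$ forces $H^0(\P^b, \Oc(l+p-a)) = 0$, killing the summand $H^a(\P^a, \Oc(k+p-a)) \otimes H^0(\P^b, \Oc(l+p-a))$. The only other possible contribution (occurring when $a = b$) is $H^0(\P^a, \Oc(k+p-a)) \otimes H^a(\P^b, \Oc(l+p-a))$, whose nonvanishing would force $k+p-a \ge 0$ and $l+p-a \le -a-1$, hence $k - l \ge a+1 = b+1$, contradicting Type 1. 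A symmetric argument swapping the roles of $(a, k)$ and $(b, l)$ yields $H^b(\Fc(p-b)) = 0$. Since the cohomology in \eqref{tlmp} is nonzero only for $i \in \{0, a, b, a+b\}$, only the levels $i = 0$ and $i = a+b$ survive; these contribute $\widehat{E}(-p) \otimes S_{k+p, l+p}$ and $\widehat{E}(a+b-p) \otimes S_{b-k-1-p, a-l-1-p}^*$ respectively, via $H^0(X, \Oc_X(k+p, l+p)) = S_{k+p, l+p}$ and the Serre duality computation already carried out in the proof of Lemma~\ref{ppmlemma}.

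The main obstacle will be the case $a = b$ in the middle-level vanishing, where both K\"unneth summands genuinely contribute and the second must be killed using the full Type 1 hypothesis rather than just the easy inequalities $a, b \ge 0$.
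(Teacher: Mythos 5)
Your proof is correct and follows essentially the same strategy as the paper: derive $p^\pm$ from \eqref{ppmdef} using the Type~1 inequality, then kill the middle cohomological levels $i=a,b$ by a K\"unneth expansion and Serre-duality degree counts. The only cosmetic difference is that the paper kills the summand $H^a(\P^a,\Oc(k+p-a))\otimes H^0(\P^b,\Oc(l+p-a))$ by observing that its nonvanishing would force $k-l<-a$, whereas you kill it more directly via $p < p^+ \le a-l$ forcing $l+p-a<0$; both are valid, and both proofs treat the $a=b$ summand with the Type~1 bound in the same way.
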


\begin{proof}
Since $a$ and $b$ are positive, the inequality $-a \le k - l \le b$
implies that $-\min\{k,l\} \le \min\{b-k,a-l\}$.  Using
\eqref{ppmdef}, we get the desired formulas for $p^-$ and $p^+$.

Now assume that $p^- < p < p^+$.  Recall that $H^{a}(\Fc(p-a))$ is
isomorphic to
\[
  H^0\!(\P^a\!,\Oc(k+p-a))\otimes\! H^a\!(\P^b\!,\Oc(l+p-a))\oplus
  H^a\!(\P^a\!,\Oc(k+p-a))\otimes\! H^0\!(\P^b\!,\Oc(l+p-a)).
\]
If the second summand is nonzero, then $k+p-a < -a$ and $l+p-a \ge 0$,
which implies $k-l < -a$, a contradiction.  If the first summand is
nonzero, then $a = b$, $k+p-a \ge 0$ and $l+p-a < -a$.  These imply
$k-l > a = b$, again a contradiction.  Hence $H^{a}(\Fc(p-a)) = 0$.
A similar argument shows that $H^{b}(\Fc(p-b)) = 0$.
\end{proof}

Thus, when $\Fc$ has Type 1, the differential $d^p:T^{p}(\Fc) \to
T^{p+1}(\Fc)$ looks like
\[
  \xymatrix@C=20pt@R=10pt{
  \widehat{E}(a+b-p)\!\otimes\!
  S_{b-k-1-p,a-l-1-p}^* \ar[r] \ar[ddr]^{\ d^p_{a+b,0}} &
  \widehat{E}(a+b-p-1)\!\otimes\!
  S_{b-k-p-2,a-l-p-2}^* \\
  \bigoplus & \bigoplus \\
  \widehat{E}(-p)\!\otimes\!
  S_{k+p,l+p} \ar[r] &
  \widehat{E}(-p-1)\!\otimes\!
  S_{k+p+1,l+p+1}
  }
\]
Hence a Type 1 Tate resolution has cohomological levels
$a+b$ (the top row) and $0$ (the bottom row).  Section~\ref{diagonal1}
will discuss $d^p_{a+b,0}$.

\begin{lemma}[Type 2]
\label{type2lemma}
Assume that $\Fc$ has Type 2.  Then $p^- = b-k-1$ and $p^+ = -l$.
Furthermore, if $p^- < p < p^+$, then
\[
  T^p(\Fc) = \widehat{E}(b-p)\otimes S_{k+p-b,0}\otimes S_{0,-l-p-1}^*.
\]
\end{lemma}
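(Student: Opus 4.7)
The plan is to mirror the proof of Lemma~\ref{type1lemma}: first read off $p^\pm$ from the definition \eqref{ppmdef} using only the Type~2 inequality $k - l > b$, then verify via Künneth and Serre duality that for $b - k \le p \le -l - 1$ every cohomological summand of $T^p(\Fc)$ in \eqref{tlmp} vanishes except the single piece at cohomological level $b$ given by the claimed formula.

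For the formulas for $p^\pm$, since $a, b > 0$ the hypothesis $k - l > b$ yields $k > l$ (so $-\min\{k,l\} = -l$) and $b - k < -l < a - l$ (so $\min\{b - k, a-l\} = b - k$, and $b - k < -l$). Plugging into \eqref{ppmdef} gives $p^+ = \max\{-l,\,b-k\} = -l$ and $p^- = \min\{-l,\,b-k\} - 1 = b - k - 1$.

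Next fix an integer $p$ with $b - k \le p \le -l - 1$. Only $i \in \{0, a, b, a+b\}$ can contribute to \eqref{tlmp}. For $i = 0$, $H^0(\Fc(p)) = S_{k+p,l+p} = 0$ because $l + p \le -1 < 0$. For $i = a+b$, Serre duality identifies the group with $S_{b-k-1-p,\,a-l-1-p}^*$, which vanishes because $b - k - 1 - p \le -1 < 0$. The Künneth formula for $H^a(\Fc(p-a))$ has two potentially nonzero summands: the $H^a(\P^a)\otimes H^0(\P^b)$ piece requires $k+p-a \le -a-1$, i.e.\ $p \le -k-1$, contradicting $p \ge b-k > -k$; the $H^0(\P^a)\otimes H^a(\P^b)$ piece requires $a = b$, in which case it coincides with the $H^b$ contribution handled below. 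Finally, the Künneth formula for $H^b(\Fc(p-b))$ leaves only the summand $H^0(\P^a,\Oc(k+p-b))\otimes H^b(\P^b,\Oc(l+p-b))$, which by Serre duality on $\P^b$ is isomorphic to
\[
  S_{k+p-b,0}\otimes S_{0,-l-p-1}^*.
\]
Both tensor factors are nonzero precisely because $p \ge b-k$ and $p \le -l-1$, and no other Künneth piece of $H^b$ can survive since the first factor is forced to live in $H^0$ of $\P^a$ under these bounds.

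The main potential obstacle is the borderline case $a = b$, where cohomological levels $a$ and $b$ coincide and one has to avoid double-counting. This is resolved by the observation that the only Künneth piece of $H^a = H^b$ compatible with $b - k \le p \le -l - 1$ is exactly the one identified above, so the Tate term is still $\widehat{E}(b-p)\otimes S_{k+p-b,0}\otimes S_{0,-l-p-1}^*$, as asserted.
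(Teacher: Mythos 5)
Your proof is correct and follows essentially the same route as the paper: compute $p^\pm$ directly from the definition using the Type~2 inequality, then run through the four possible cohomological levels $i \in \{0, a, b, a+b\}$ via K\"unneth and Serre duality, showing only the level-$b$ piece $H^0(\P^a)\otimes H^b(\P^b)$ survives. The only (harmless) difference is in which factor you kill to dispatch the $H^a(\P^a)\otimes H^0(\P^b)$ piece (you use $k+p-a > -a-1$, the paper uses $l+p-a < 0$), and your explicit handling of the $a=b$ coincidence is a welcome clarification that the paper leaves implicit.
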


\begin{proof}
Since $a$ and $b$ are positive, the inequality $k - l > b$ implies
$\min\{k,l\} = l$, $\min\{b-k,a-l\} = b-k$.  Using $k - l > b$ again,
\eqref{ppmdef} gives the desired formulas for $p^-$ and $p^+$.

Now assume that $p^- < p < p^+$.  Then
\[
  H^{a+b}(\Fc(p-(a+b)) \simeq H^0(X,\Oc_X(b-k-1-p,a-l-1-p)^* = 0
\]
since $p > p^- = b-k-1$.  Furthermore, $p < p^+ = -l$ implies $l+p - b
< 0$, so that
\[
  H^b(\P^a,\Oc(k+p-b))\otimes H^0(\P^b,\Oc(l+p-b)) = 0.
\]
Hence, by K\"unneth and Serre duality on $\P^b$,
\begin{align*}
  H^b(\Fc(p-b)) &\simeq H^b(X,\Oc_X(k+p-b,l+p-b))\\
  &\simeq H^0(\P^a,\Oc(k+p-b))\otimes H^b(\P^b,\Oc(l+p-b))\\
  &\simeq S_{k+p-b,0}\otimes S_{0,-l-p-1}^*.
\end{align*}

Finally, if $a \ne b$, we also have $H^a(\P^b,\Oc(l+p-a)) = 0$, and
$H^0(\P^b,\Oc(l+p-a)) = 0$ also holds since $l+p - a < 0$.  Hence
$H^a(\Fc(p-a)) = 0$ when $a \ne b$.  A similar argument shows $H^0(\mathcal{F}(p)) = 0$.  
\end{proof}

Lemma~\ref{type2lemma} tells us that for Type 2 Tate resolutions, the
only nonzero diagonal maps appear in $T^{p^-}(\Fc) \to T^{p^- +
1}(\Fc)$:
\[
  \xymatrix@C=15pt@R=15pt{
  \widehat{E}(a+1+k)\otimes S_{0,a+k-l-b}^*
  \ar[dr]^(.55){d^{p^-}_{a+b,b}} \\ & \widehat{E}(k)\otimes
  S_{0,0}\otimes S_{0,k-l-b-1}^*
  }
\]
(at cohomological levels $a+b$ and $b$) and in $T^{p^+ - 1}(\Fc) \to
T^{p^+}(\Fc)$:
\[
  \xymatrix@C=15pt@R=15pt{
  \widehat{E}(b+1+l)\otimes S_{k-l-b-1,0}\otimes S_{0,0}^*
  \ar[dr]^(.65){d^{p^+ - 1}_{b,0}}\\
  & \widehat{E}(l)\otimes S_{k-l,0}
  }
\]
(at cohomological levels $b$ and $0$).  The diagonal maps
$d^{p^-}_{a+b,b}$ and $d^{p^+ - 1}_{b,0}$ will be discussed in
Section~\ref{diagonal2}.

\begin{lemma}[Type 3]
\label{type3lemma}
Assume that $\Fc$ has Type 3.  Then $p^- = a-l-1$ and $p^+ = -k$.
Furthermore, if $p^- < p < p^+$, then
\[
  T^p(\Fc) = \widehat{E}(a-p)\otimes S_{-k-p-1,0}^*\otimes S_{0,l+p-a}.
\]
\end{lemma}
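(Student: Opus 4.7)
The plan is to mimic the proof of Lemma~\ref{type2lemma}, swapping the roles of the two projective factors. Type~3's condition $k - l < -a$ is obtained from Type~2's $k - l > b$ by exchanging $(a,k) \leftrightarrow (b,l)$ and reversing signs, so I expect the cohomology vanishing and the K\"unneth/Serre calculation to transpose in the obvious way, producing cohomological level $a$ (rather than $b$) as the only contribution in the intermediate range.

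First I would extract $p^\pm$ directly from \eqref{ppmdef}. The hypothesis forces $\min\{k,l\} = k$ (since $k - l < -a \le 0$) and $\min\{b-k,a-l\} = a - l$ (since $-k > a - l$ by rearranging the hypothesis, while $b-k > a-l$ because $b \ge 0 > k - l + a$ contradicts the hypothesis, so actually $b - k \ge a - l$). These give $p^+ = \max\{-k, a-l\} = -k$ and $p^- = \min\{-k, a-l\} - 1 = a-l-1$.

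Next, I fix $p$ with $p^- < p < p^+$ and check the four potentially nonzero cohomology groups. The vanishing $H^{a+b}(\Fc(p-(a+b))) = 0$ is immediate from Serre duality together with $p > a - l - 1$, and $H^0(\Fc(p)) = H^0(X,\Oc_X(k+p,l+p)) = 0$ is immediate from $p < -k$. For $H^a(\Fc(p-a))$ the K\"unneth formula gives two summands; the one containing $H^0(\P^a,\Oc(k+p-a))$ vanishes because $k+p-a < -a < 0$, while the other, $H^a(\P^a,\Oc(k+p-a)) \otimes H^0(\P^b,\Oc(l+p-a))$, is identified by Serre duality on $\P^a$ with $S_{-k-p-1,0}^* \otimes S_{0,\,l+p-a}$, with both factors nonzero because $-k-p-1 \ge 0$ and $l+p-a \ge 0$ in the specified range of $p$. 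Tensoring with $\widehat{E}(a-p)$ yields the claimed formula. Finally, a parallel K\"unneth argument (relevant only when $a \ne b$) handles $H^b(\Fc(p-b)) = 0$: the only surviving summand would require $H^0(\P^a,\Oc(k+p-b))$, and this vanishes since $k+p-b < -b < 0$.

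The only subtlety to watch is the degeneracy $a = b$, where cohomological levels $a$ and $b$ coincide and the middle K\"unneth decomposition is slightly different; but in that case there is just one middle group to compute, and the expression already derived for $H^a(\Fc(p-a))$ describes it correctly, so no separate argument is needed. I do not foresee a substantive obstacle here — the main bookkeeping task is tracking which K\"unneth factor gets Serre-dualized, and this is precisely what produces the asymmetry between the Type~2 formula ($S$ tensor $S^*$) and the Type~3 formula ($S^*$ tensor $S$).
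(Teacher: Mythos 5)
Your proposal is correct and is exactly what the paper intends: the paper states that the proof is ``similar to the proof of Lemma~\ref{type2lemma} and hence is omitted,'' and you have supplied precisely that parallel argument, interchanging the roles of the two factors. The derivation of $p^\pm$, the vanishing of $H^0$ and $H^{a+b}$ and $H^b$ (for $a \ne b$) in the intermediate range, and the K\"unneth/Serre identification of $H^a(\Fc(p-a))$ with $S_{-k-p-1,0}^*\otimes S_{0,l+p-a}$ are all correct, including the observation that the $a=b$ case needs no separate treatment. The only cosmetic issue is that your justification of $\min\{b-k,a-l\}=a-l$ is phrased awkwardly (the relevant inequality is $b-a \ge -a > k-l$, which gives $b-k>a-l$), but the conclusion is right.
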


\begin{proof}
The proof is similar to the proof of Lemma~\ref{type2lemma} and hence
is omitted.
\end{proof}

Lemma~\ref{type3lemma} tells us that for Type 3 Tate resolutions, the
only nonzero diagonal maps appear in $T^{p^-}(\Fc) \to T^{p^- +
1}(\Fc)$:
\[
  \xymatrix@C=15pt@R=15pt{
  \widehat{E}(b+1+l)\otimes S_{b-k+l-a,0}^*
  \ar[dr]^(.55){d^{p^-}_{a+b,a}} & \\ & \widehat{E}(l)\otimes
  S_{l-k-a-1,0}^* \otimes S_{0,0}
  }
\]
(at cohomological levels $a+b$ and $a$) and  in $T^{p^+ - 1}(\Fc) \to
T^{p^+}(\Fc)$:
\[
  \xymatrix@C=15pt@R=15pt{
  \widehat{E}(a+1+k)\otimes S_{0,0}^*\otimes S_{0,l-k-a-1}
  \ar[dr]^(.65){d^{p^+ - 1}_{a,0}}\\
  & \widehat{E}(k)\otimes S_{0,l-k}
}
\]
(at cohomological levels $a$ and $0$).  The diagonal maps
$d^{p^-}_{a+b,a}$ and $d^{p^+ - 1}_{a,0}$ will be discussed in
Section~\ref{diagonal2}.

\begin{remark}
We finish this section by noting that some of the Tate resolutions
considered here can be found in Fl{\o}ystad's paper
\cite{Floystad_hom}.  Specifically, let $W_1$ and
$W_2$ be finite dimensional $K$-vector spaces, and consider
the Tate resolution associated to $\Fc = \nu_*{\mathcal L}$, where
${\mathcal L}=\Oc_{\P(W_1)\times\P(W_2)}(-2,a)\otimes\wedge^{a+1}
W_1$, $\dim W_1 = a+1$, where
\[
\nu:\P(W_1)\times\P(W_2)\longrightarrow \P(W_1\otimes W_2)
\] 
is the Segre embedding.  The results of our paper apply to this Tate
resolution.  

Now consider a surjective map $W_1^*\otimes W_2^* \rightarrow W^*$.
This gives a projection
\[
\pi: \P(W_1\otimes W_2) \dasharrow \P(W)
\]
whose center is disjoint from the image of the Segre map.  By
\cite[Section 1.2]{Floystad_hom}, the Tate resolution for $\Fc$ gives
a Tate resolution for $\Gc = \pi_*\Fc$.  Fl{\o}ystad shows that this
projected Tate resolution has the form
\[
\cdots \to T^{-1}(\Gc)
\to
T^0(\Gc) = \widehat{E}(a)\otimes W_1^* \to
T^1(\Gc) = \widehat{E}(a-1)\otimes W_2^* \to T^2(\Gc)
\to \cdots 
\]
with the map $d^0:T^0(\Gc)\rightarrow T^1(\Gc)$ coming from the
surjection $W_1^*\otimes W_2^* \rightarrow W^*$ (see \cite[Theorem
2.1]{Floystad_hom}).
\end{remark}

\section{The Maps in the Tate resolution for Segre Embeddings of
$\P^a\times\P^b$} 
\subsection{Type 1 Diagonal Maps}
\label{diagonal1}
We will use the toric Jacobian from \cite[\S4]{Cox_res}.  The fan for
$\P^a\times\P^b$ has $a+b+2$ 1-dimensional cone generators
$e_0,\dots,e_a,e_0',\dots,e_b'$, corresponding to
$x_0,\dots,x_a,y_0,\dots,y_b$.  The generators
$e_1,\dots,e_a,e_0',\dots,e_{b-1}'$ are linearly independent.  Given
$f_0,\dots,f_{a+b} \in S_{1,1}$, the toric Jacobian is
\begin{eqnarray}
\label{toric_Jac}
  J(f_0,\dots,f_{a+b}) = \frac{1}{x_0y_b} \det\begin{pmatrix}
  f_0 & \cdots & f_{a+b}\\
  \frac{\partial f_0}{\partial x_1} & \cdots & \frac{\partial
  f_{a+b}}{\partial x_1}\\
  \vdots & & \vdots\\
  \frac{\partial f_0}{\partial x_a} & \cdots & \frac{\partial
  f_{a+b}}{\partial x_a}\\
  \frac{\partial f_0}{\partial y_0} & \cdots & \frac{\partial
  f_{a+b}}{\partial y_0}\\
  \vdots & & \vdots\\
  \frac{\partial f_0}{\partial y_{b-1}} & \cdots & \frac{\partial
  f_{a+b}}{\partial y_{b-1}}
  \end{pmatrix}.
\end{eqnarray}
Since $f_i \in S_{1,1} = W$, we see that $J(f_0,\dots,f_{a+b}) \in
S_{b,a}$, where $(b,a)$ is the ``critical degree,'' often denoted
$\rho$ in the literature on toric residues.

This toric Jacobian is closely related to the
$(a+1)\times(a+b+1)\times(b+1)$ hyperdeterminant discussed in
\cite[14.3.D]{GKZ}.  The connection becomes especially clear when we
use the graph intepretation from \cite[pp.\ 473--474]{GKZ}.  The idea
is as follows.

Fix distinct monomials $f_0,\dots,f_{a+b} \in S_{1,1}$.  These give a
bipartite graph $G$ with $a+b+2$ vertices $x_0,\dots,x_a,y_0,\dots,y_b$
and $a+b+1$ edges given by the monomials, where $f_\ell = x_iy_j$ is
regarded as the edge connecting $x_i$ to $y_j$.  The incidence matrix
of $G$ is the $(a+b+2)\times(a+b+1)$ matrix whose rows
correspond to vertices and columns correspond to edges, and where an
entry is $1$ is the vertex lies on the edge and is $0$ otherwise.

Let $M$ denote the square matrix obtained from the incidence
matrix by removing the bottom row.  Then we have the following result.

\begin{lemma}
\label{torictree}
Let $f_0,\dots,f_{a+b} \in S_{1,1}$ be distinct monomials and let $M$
be the matrix described above.  Then:
\begin{enumerate}
\item The toric Jacobian of $f_0,\dots,f_{a+b}$ is given by
\[
  J(f_0,\dots,f_{a+b}) = \det M\, \frac{\prod_\ell f_\ell}{\prod_i x_i
  \prod_j y_j}.
\]
\item $\det M \in \{0,\pm1\}$, and $\det M = \pm 1$ if and only if $G$
is a tree.
\end{enumerate}
\end{lemma}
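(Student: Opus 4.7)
The plan is to prove (1) by direct computation using multilinearity of the determinant, and to prove (2) by identifying $M$ with a classical object from graph theory.

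For part (1), I would exploit the fact that each $f_\ell = x_{i(\ell)} y_{j(\ell)}$ is a monomial, so $\partial f_\ell/\partial x_i = \delta_{i,i(\ell)}\, y_{j(\ell)}$ and $\partial f_\ell/\partial y_j = \delta_{j,j(\ell)}\, x_{i(\ell)}$. By multilinearity, I would factor $f_\ell$ out of column $\ell$ of the matrix inside the determinant in \eqref{toric_Jac}, and then pull $1/x_i$ out of each $x_i$-row for $i=1,\dots,a$ and $1/y_j$ out of each $y_j$-row for $j=0,\dots,b-1$. The resulting matrix $M'$ has a top row of all $1$'s and the other rows equal to the rows of $M$ indexed by $x_1,\dots,x_a,y_0,\dots,y_{b-1}$. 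Because each $f_\ell$ contains exactly one of $x_0,\dots,x_a$, the column sum $\sum_{i=0}^{a} \delta_{i,i(\ell)}$ equals $1$, so replacing the $x_0$-row of $M$ by the sum of all its $x$-rows is an elementary row operation that produces $M'$. Hence $\det M' = \det M$, and combining with the prefactor $1/(x_0 y_b)$ gives the formula in (1).

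For part (2), I would view $M$ as the unsigned vertex-edge incidence matrix of the bipartite graph $G$ with the $y_b$-row deleted. Multiplying the rows indexed by $y_0,\dots,y_{b-1}$ by $-1$ alters $\det M$ by an overall sign and converts $M$ into the reduced signed incidence matrix of $G$, with every edge oriented from the $x$-side to the $y$-side. I would then invoke the classical fact that signed vertex-edge incidence matrices are totally unimodular, which immediately gives $\det M \in \{0,\pm 1\}$. For the characterization, I would cite the standard result that a collection of $|V|-1$ columns of a reduced signed incidence matrix is linearly independent (equivalently, the corresponding submatrix has determinant $\pm 1$) if and only if the corresponding edges form a spanning tree of $G$. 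Since $|V| = a+b+2$ and the number of edges equals $|V|-1 = a+b+1$, the full edge set of $G$ is a spanning tree if and only if $G$ itself is a tree.

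The main obstacle is the bookkeeping in part (1): keeping the prefactor $1/(x_0 y_b)$, the column scalars $f_\ell$, the row scalars $1/x_i$ and $1/y_j$, and the single row operation straight so that the denominator in the statement emerges as $\prod_i x_i \prod_j y_j$. Part (2) amounts to recognizing $M$ as a classical object and quoting total unimodularity together with the Matrix-Tree-style characterization of nonsingular reduced incidence minors, so no genuine difficulty is expected there.
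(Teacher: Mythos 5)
Your proof is correct and follows essentially the same elementary strategy as the paper: multilinearity of the determinant for part (1) and an appeal to the Matrix Tree Theorem (via total unimodularity of the reduced incidence matrix) for part (2). The only difference is the order of operations in (1): the paper first uses Euler's formula $f_\ell = \sum_i x_i\,\partial f_\ell/\partial x_i$ to replace the top row before scaling rows and factoring columns, whereas you factor $f_\ell$ out of the columns first and only at the end invoke the equivalent observation that the all-ones row is the sum of the $x$-rows of the incidence matrix.
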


\begin{proof}
Each $f_\ell$ is homogeneous of degree $1$ in $x_0,\dots,x_a$, so
$f_\ell = \sum_i x_i \frac{\partial f_\ell}{\partial x_i}$.  Hence
the toric Jacobian $J(f_0,\dots,f_{a+b})$ can be written
\[
  \frac{1}{x_0y_b} \det\begin{pmatrix}
  x_0\frac{\partial f_0}{\partial x_0} & \!\!\cdots \!\!& x_0\frac{\partial
  f_{a+b}}{\partial x_0}\\
  \frac{\partial f_0}{\partial x_1} & \!\!\cdots\!\! & \frac{\partial
  f_{a+b}}{\partial x_1}\\
  \vdots & & \vdots\\
  \frac{\partial f_0}{\partial x_a} & \!\!\cdots\!\! & \frac{\partial
  f_{a+b}}{\partial x_a}\\
  \frac{\partial f_0}{\partial y_0} & \!\!\cdots\!\! & \frac{\partial
  f_{a+b}}{\partial y_0}\\
  \vdots & & \vdots\\
  \frac{\partial f_0}{\partial y_{b-1}} & \!\!\cdots\!\! & \frac{\partial
  f_{a+b}}{\partial y_{b-1}}
  \end{pmatrix}
  = \frac{1}{\prod_i x_i \prod_j y_j} \det\begin{pmatrix}
  x_0\frac{\partial f_0}{\partial x_0} & \!\!\!\cdots\!\! & x_0\frac{\partial
  f_{a+b}}{\partial x_0}\\
  x_1\frac{\partial f_0}{\partial x_1} & \!\!\!\cdots\!\! & x_1\frac{\partial
  f_{a+b}}{\partial x_1}\\
  \vdots & & \vdots\\
  x_a \frac{\partial f_0}{\partial x_a} & \!\!\!\cdots\!\! & x_a \frac{\partial
  f_{a+b}}{\partial x_a}\\
  y_0 \frac{\partial f_0}{\partial y_0} & \!\!\!\cdots\!\! & y_0 \frac{\partial
  f_{a+b}}{\partial y_0}\\
  \vdots & & \vdots\\
  y_{b-1}\frac{\partial f_0}{\partial y_{b-1}} & \!\!\!\cdots\!\! &
  y_{b-1}\frac{\partial
  f_{a+b}}{\partial y_{b-1}}
  \end{pmatrix}.
\]
For a fixed $\ell$, we have $f_\ell = x_i y_j$, which implies
\[
  f_\ell = x_i \frac{\partial f_\ell}{\partial x_i} = y_j \frac{\partial
  f_\ell}{\partial y_j},
\]
and all other partials vanish.  Hence the $\ell$th column is a
multiple of $f_\ell$, and once we factor out $f_\ell$, we are left
with the $\ell$th column of the truncated incidence matrix $M$.  Thus
\[
  J(f_0,\dots,f_{a+b}) = \frac{f_0 \cdots
  f_{a+b}}{\prod_i x_i \prod_j y_j} \ \det(M).
\]
The second part of the lemma is a standard consequence of the Matrix
Tree Theorem \cite[Chapter 12]{BM} which counts the number of spanning
trees of a graph.
\end{proof}

Now that we have the toric Jacobian, the next step in to introduce
duplicate sets of variables:
\[
  \mathbf{X} = (X_0,\dots,X_a),\ \mathbf{Y} = (Y_0,\dots,Y_b),\
  \mathbf{x} = (x_0,\dots,x_a),\ \mathbf{y} = (y_0,\dots,y_b).
\]
These give the polynomial ring
\[
  S \otimes S = k[\mathbf{X},\mathbf{Y},\mathbf{x},\mathbf{y}]
\]
and the ring homomorphism
\[
  S = k[\mathbf{x},\mathbf{y}] \longrightarrow S \otimes S
\]
defined by $x_i \mapsto X_i+x_i,y_i \mapsto Y_i+y_i$.  The image of $F
\in S$ in $S \otimes S$ is denoted $\widetilde{F}$, so that
\[
  \widetilde{F}(\mathbf{X},\mathbf{Y},\mathbf{x},\mathbf{y}) =
  F(\mathbf{X}+\mathbf{x},\mathbf{Y}+\mathbf{y}) \in S \otimes S.
\]
>From a canonical point of view, the map $F \mapsto \widetilde{F}$ is
comultiplication in the natural Hopf algebra structure on $S$.

The toric Jacobian $J$ gives a linear map
\[
  J : {\textstyle\bigwedge}^{a+b+1}W \longrightarrow  S_{b,a} \subset S
\]
and hence a map
\[
  \widetilde{J} : {\textstyle\bigwedge}^{a+b+1}W \longrightarrow  S
  \otimes S.
\]
Looking at homogeneous pieces, we have a decomposition
\[
  \widetilde{J} = {\textstyle\bigoplus}_{\alpha,\beta}
  J_{\alpha,\beta},
\]
where
\[
  J_{\alpha,\beta}: {\textstyle\bigwedge}^{a+b+1}W
  \longrightarrow S_{b-\alpha,a-\beta} \otimes S_{\alpha,\beta}
\]
lies in
\[
  \mathrm{Hom}_K({\textstyle\bigwedge}^{a+b+1}W, S_{b-\alpha,a-\beta}
  \otimes S_{\alpha,\beta}) \simeq
  \mathrm{Hom}_K({\textstyle\bigwedge}^{a+b+1}W \otimes
  S_{b-\alpha,a-\beta}^*,S_{\alpha,\beta}).
\]
Using \eqref{canonical}, $J_{\alpha,\beta}$ gives an element of
\[
  \mathrm{Hom}_E(\widehat{E}(a+b-p)\otimes
  S_{b-\alpha,a-\beta}^*,\widehat{E}(-p-1)\otimes S_{\alpha,\beta}),
\]
which by abuse of notation we write as
\begin{equation}
\label{jab2}
  J_{\alpha,\beta} : \widehat{E}(a+b-p)\otimes
  S_{b-\alpha,a-\beta}^* \longrightarrow \widehat{E}(-p-1)\otimes
  S_{\alpha,\beta}.
\end{equation}
In Section~\ref{mainthm} we will show that the map $d^p_{a+b,0}$ from
a Type 1 Tate resolution (see the discussion of following
Lemma~\ref{type1lemma}) can be chosen to be $J_{k+p+1,l+p+1}$.

\subsection{Type 2 and 3 Diagonal Maps}
\label{diagonal2}
The diagonal maps appearing the Type 2 and 3 Tate resolutions
discussed in Section~\ref{shape} are easy to describe.  We begin with
the map
\[
  \delta : {\textstyle\bigwedge}^{a+1}W \longrightarrow S_{0,a+1}
\]
defined as follows:\ given $f_0,\dots,f_a \in W$, we get the
\emph{Sylvester form}
\[
 \delta(f_0,\dots,f_a) = \det (\ell_{ij}), \ \text{where}\ f_i =
 {\textstyle\sum_{j=0}^a} \ell_{ij} x_j, \ \ell_{ij} \in S_{0,1}.
\]

Now fix $\alpha \ge 0$.  The multiplication map $S_{0,a+1} \otimes
S_{0,\alpha} \to S_{0,a+1+\alpha}$ induces
\[
  S_{0,a+1} \longrightarrow  S_{0,\alpha}^* \otimes S_{0,a+1+\alpha}
\]
and gives the composition
\[
  {\textstyle\bigwedge}^{a+1}W \xrightarrow{\ \delta \ } S_{0,a+1}
  \longrightarrow S_{0,\alpha}^* \otimes S_{0,a+1+\alpha}.
\]
This gives maps
\begin{align*}
  \delta_\alpha &: {\textstyle\bigwedge}^{a+1}W \otimes
  S_{0,\alpha} \longrightarrow S_{0,a+1+\alpha}\\ \delta_\alpha^*
  &: {\textstyle\bigwedge}^{a+1}W \otimes S_{0,a+1+\alpha}^*
  \longrightarrow S_{0,\alpha}^*
\end{align*}
and hence (by abuse of notation) maps
\begin{equation}
\label{deltatilde}
  \begin{aligned}
  \delta_\alpha &: \ed(a+1+k) \otimes
  S_{0,\alpha} \longrightarrow \ed(k)\otimes S_{0,a+1+\alpha}\\
  \delta_\alpha^*
  &: \ed(a+1+k) \otimes S_{0,a+1+\alpha}^*
  \longrightarrow \ed(k)\otimes S_{0,\alpha}^*.
  \end{aligned}
\end{equation}
In Section~\ref{mainthm} we will show that the diagonal map
$d^{p^-}_{a+b,b}$ from a Type 2 Tate resolution (see the discussion of
following Lemma~\ref{type2lemma}) and the map $d^{p^+ - 1}_{a,0}$ from
a Type 3 Tate resolution (see the discussion of following
Lemma~\ref{type3lemma}) can be chosen to be $\delta^*_{k-l-b-1}$ and
$\delta_{l-k-a-1}$ respectively.

We next consider the map
\[
  \delta' : {\textstyle\bigwedge}^{b+1}W \longrightarrow S_{b+1,0}
\]
defined as follows:\ given $f_0,\dots,f_b \in W$,
\[
  \delta'(f_0,\dots,f_a) = \det (\ell_{ij}'), \ \text{where}\
f_i = \sum_{j=0}^b \ell_{ij}' y_j, \ \ell_{ij}' \in S_{1,0}.
\]

As above, $\beta \ge 0$ gives the multiplication map $S_{b+1,0}
\otimes S_{\beta,0} \to S_{b+1+\beta,0}$ and the composition
\[
  {\textstyle\bigwedge}^{b+1}W \xrightarrow{\ \delta' \ } S_{b+1,0}
  \longrightarrow S_{\beta,0}^* \otimes S_{b+1+\beta,0}.
\]
This gives maps
\begin{equation}
\label{deltatildep}
  \begin{aligned}
  \delta_\beta' &:  \ed(b+1+l)\otimes S_{\beta,0}
  \longrightarrow \ed(l)\otimes S_{b+1+\beta,0}\\
  \delta_\beta^{\prime *} &:  \ed(b+1+l)\otimes
  S_{b+1+\beta,0}^* \longrightarrow \ed(l)\otimes S_{\beta,0}^*.
\end{aligned}
\end{equation}
In Section~\ref{mainthm} we will show that the map $d^{p^+ -1}_{b,0}$
from a Type 2 Tate resolution (see the discussion of following
Lemma~\ref{type2lemma}) and the map $d^{p^-}_{a+b,a}$ from a Type 3
Tate resolution (see the discussion of following
Lemma~\ref{type3lemma}) can be chosen to be $\delta'_{k-l-b-1}$ and
$\delta_{l-k-a-1}^{\prime *}$ respectively.

\subsection{The Main Theorem}
\label{mainthm}

Here is the main result of this section.

\begin{theorem}
For the Tate resolution $T^\bullet(\Fc)$ of the sheaf $\Fc =
\nu_*\Oc_X(k,l)$, the diagonal maps in $T^p(\Fc) \to T^{p+1}(\Fc)$ can
be chosen as follows:
\begin{enumerate}
\item {\rm (Type 1, $-a \le k-l \le b$):} $d^p_{a+b,0} = (-1)^p
  J_{k+p+1,l+p+1}$.
\item {\rm (Type 2, $k-l > b$):} $d^{p^-}_{a+b,b} =
  \delta^*_{k-l-b-1}$ and $d^{p^+ - 1}_{b,0} =
  \delta'_{k-l-b-1}$.
\item {\rm (Type 3, $k-l < -a$):} $d^{p^-}_{a+b,a} =
  \delta^{\prime *}_{l-k-a-1}$ and $d^{p^+ - 1}_{a,0} =
  \delta_{l-k-a-1}$.
\end{enumerate}
This uses the maps $J_{\alpha,\beta}, \delta_{\alpha},
\delta_{\alpha}^*, \delta'_{\beta}, \delta_{\beta}^{\prime *}$ defined
in \eqref{jab2}, \eqref{deltatilde} and \eqref{deltatildep}.
\end{theorem}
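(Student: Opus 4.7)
The plan is to exploit the uniqueness of the minimal Tate resolution: once the terms and one family of differentials are specified, the complex is determined up to isomorphism, provided that it is acyclic and that the proposed differentials have no constant entries (the minimality condition). Every map appearing in the theorem is constructed from polynomials of strictly positive bidegree in $W$, so minimality is automatic; what must be checked is that (i) the proposed $d^p$'s fit together with the known horizontal components into a genuine complex, and (ii) the resulting bi-infinite complex is acyclic. I would handle the three types separately since they involve rather different cohomological patterns.

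For Types~2 and 3 the argument is essentially Koszul-theoretic.  Consider Type~2: by Lemma~\ref{type2lemma} the only nonzero cohomological levels in the relevant range are $a+b$, $b$, and $0$, and K\"unneth factors the level-$b$ piece as $S_{k+p-b,0}\otimes S_{0,-l-p-1}^*$, where the first tensor factor lives on $\P^a$ and the second encodes a Serre-dual class on $\P^b$.  The Sylvester form $\delta'$ is precisely the top differential of the Koszul complex built from writing $f_0,\ldots,f_b\in W$ as $\sum_j \ell_{ij}' y_j$, and a direct BGG computation on the $\P^a$-factor identifies $d^{p^+-1}_{b,0}$ with $\delta'_{k-l-b-1}$.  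The map $d^{p^-}_{a+b,b}=\delta^*_{k-l-b-1}$ then follows by applying Serre duality on $\P^b$ to the same Koszul complex.  Type~3 is obtained from Type~2 by the involution $a\leftrightarrow b$, $k\leftrightarrow l$, so nothing genuinely new occurs.

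The substance of the theorem is Type~1, where the long diagonal $d^p_{a+b,0}$ connects the two extremes of cohomology and must be identified with $J_{k+p+1,l+p+1}$.  My plan is to run the following chain of reductions.  First, by the adjunction \eqref{canonical}, $d^p_{a+b,0}$ is determined by a $K$-linear map ${\textstyle\bigwedge}^{a+b+1}W\otimes S_{b-k-1-p,a-l-1-p}^*\to S_{k+p+1,l+p+1}$.  Second, iterating the horizontal maps (which are already identified with multiplication by $W$) reduces the identification to a single cup-product pairing with the canonical generator of $H^{a+b}(X,\Oc_X(-a-1,-b-1))$, i.e., to the Serre-duality trace.  Third, one checks that the toric Jacobian is the polynomial object representing this trace via the toric residue of \cite{Cox_res}.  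Lemma~\ref{torictree} provides the combinatorial bridge: it says that on monomial inputs the Jacobian computes $\pm 1$ exactly when the associated bipartite graph is a spanning tree, which matches the \v{C}ech-cocycle representation of the top cohomology class on the standard affine cover of $X$.

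The final step is to verify that with $d^p_{a+b,0}=(-1)^p J_{k+p+1,l+p+1}$ the composition $d^{p+1}\circ d^p$ vanishes.  This splits into a horizontal-then-diagonal and a diagonal-then-horizontal contribution, and their cancellation follows from the Leibniz-type behavior of $J$ under multiplication by elements of $W$, together with the sign $(-1)^p$.  The main obstacle is unquestionably the Type~1 identification in the third step above: showing that the toric Jacobian really does compute the Serre-duality trace requires either an explicit \v{C}ech computation on the standard cover of $\P^a\times\P^b$ or an appeal to the full toric-residue machinery of \cite{Cox_res}, and this is the step most likely to demand careful bookkeeping with signs and indices.
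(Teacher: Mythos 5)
Your overall framework---build a candidate complex with the known terms, verify it is a complex, verify acyclicity, and invoke the uniqueness of the minimal Tate resolution---is the same framework the paper uses.  But the proposal has two genuine gaps, one in each half of that framework, and the route you sketch for Type~1 is quite different from the paper's.

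First, for the Type~1 complex property you say that $d^{p+1}\circ d^p = 0$ ``follows from the Leibniz-type behavior of $J$ under multiplication by elements of $W$.''  That phrase obscures the actual content.  The identity that must be proved is the commutativity $d^{p+1}_{0,0}\circ J_{\alpha,\beta} = J_{\alpha+1,\beta+1}\circ d^p_{a+b,a+b}$, with the two horizontal maps acting through the \emph{two different} sets of variables $\mathbf{x},\mathbf{y}$ and $\mathbf{X},\mathbf{Y}$.  The paper reduces this to the symmetry statement \eqref{invariant} for the alternating sum $\sum_i (-1)^i f_i\,\widetilde J(f_0\wedge\cdots\widehat{f_i}\cdots\wedge f_{a+b+1})$, which it proves by representing that sum as $\det\mathbf{M}/(Y_b+y_b)$, padding to $\overline{\mathbf{M}}$, row-reducing using the Euler relations, and observing $\det\widetilde Q = 0$.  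None of this is a formal Leibniz rule; it is a specific nontrivial computation, and the sign $(-1)^p$ you mention plays no role in it.  Separately, your preferred strategy for Type~1---iterate horizontal maps to reduce to the Serre duality trace and then show $J$ realizes that trace via toric residues---is a legitimately different and more conceptual idea, but you explicitly flag its key third step as unfinished, and the paper does not pursue it at all.

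Second, and more seriously, the acyclicity half of your plan is never addressed beyond being listed as item~(ii).  In the paper this is where most of the work lives.  After the chain-rule identity, the paper reduces exactness at $T^{p^+ -1}\to T^{p^+}$ to two claims in a single graded degree: that $J_{b,\beta}$ is injective, and that the image of $J_{b,\beta}$ meets the image of the known horizontal map $d^p_{0,0}$ trivially.  Injectivity uses Lemma~\ref{torictree} and spanning trees in exactly the spirit you gesture at, so that part of your intuition is right.  But the trivial-intersection step is handled by representation theory: $J_{b,\beta}$ and $d^p_{0,0}$ are $\mathrm{SL}(W_1)\times\mathrm{SL}(W_2)$-equivariant (the former because $J$ is built from hyperdeterminants), so by Schur's Lemma it suffices to show that a certain irreducible does not occur in $\bigwedge^{ab+1}(W_1\otimes W_2)\otimes\mathrm{Sym}^{b-1}W_1\otimes\mathrm{Sym}^{\beta-1}W_2$, which the paper checks with the Cauchy and Pieri formulas.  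This argument has no analogue in your proposal; without it (or a replacement), acyclicity of the Type~1 complex is unproven, and ``uniqueness of the minimal Tate resolution'' cannot be invoked.

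For Types~2 and~3 your sketch is closer to what actually happens, but even here the paper does not perform ``a direct BGG computation on the $\P^a$-factor'': it proves $d^0\circ\delta'_\beta = 0$ and $\delta'_\beta\circ d^{-2}=0$ from the elementary identity \eqref{deltaidentity}, checks the corner case $\beta=0$ via a Laplace expansion, and proves injectivity in the generating degree by a basis calculation; the $d^{p^-}$ map is then obtained by the duality $T^p(\Fc)\mapsto T^{a+b-p}(\Gc)$ rather than by Serre duality on $\P^b$ alone.  In short: your decomposition of the problem is sound, the spanning-tree intuition and the duality intuition are both on target, but the heart of the Type~1 argument (the determinantal chain rule and especially the representation-theoretic exactness step) is missing, and the Serre-duality/toric-residue route you propose instead is left as an acknowledged obstacle rather than a proof.
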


\begin{proof}
We begin with Type 2.  Let $\beta = k-l-b-1$ and assume $l = 0$ for
simplicity, so that $p^+ = 0$.  We will show that $T^{-2}(\Fc) \to
T^{-1}(\Fc) \to T^0(\Fc) \to T^{1}(\Fc)$ can be constructed as follows
using  $\delta'_{\beta}$:
\[
  \xymatrix@C=8pt@R=15pt{ \widehat{E}(b+2)\otimes S_{\beta-1,0}\otimes
  S_{0,1}^* \ar[r]^(.54){d^{-2}} & \widehat{E}(b+1)\otimes
  S_{\beta,0}\otimes S_{0,0}^* \ar[dr]^(.68){\delta'_{\beta}}\\ &
  & \hskip-48pt\widehat{E}(0)\otimes S_{\beta+b+1,0}
  \ar[r]^{\hskip-28pt d^0} & \widehat{E}(-1)\otimes S_{\beta+b+2,1}}.
\]
The differentials $d^{-2}$ and $d^0$ are the known horizontal maps.
To show that this sequence is exact, the first step is to prove that
$d^0\circ \delta'_{\beta} = \delta'_{\beta}\circ d^{-2} = 0$.
Consider the following identity that holds for all $f_0,\dots,f_{b+1}
\in W$:
\begin{equation}
\label{deltaidentity}
  \sum_{i = 0}^{b+1} (-1)^i f_i\, \delta'(f_0\wedge \cdots \widehat{f_i}
  \cdots \wedge f_{b+1}) = 0.
\end{equation}
If we write $f_i = \sum_{j=0}^b \ell_{ij}'y_j$, then
\eqref{deltaidentity} follows from the obvious identity
\[
  \det\begin{pmatrix} f_0 & \dots & f_{b+1}\\
  \ell_{0,0}' & \dots & \ell_{b+1,0}'\\
  \vdots && \vdots\\
  \ell_{0,b}' & \dots & \ell_{b+1,b}'
  \end{pmatrix} = 0
\]
by expanding by minors along the first row and using the definition of
$\delta'$.

By \eqref{canonical}, the composition
\[
  \widehat{E}(b+1)\otimes
  S_{\beta,0}\otimes S_{0,0}^* \xrightarrow{\delta'_{\beta}}
  \widehat{E}(0)\otimes S_{\beta+b+1,0} \xrightarrow{d^0}
  \widehat{E}(-1)\otimes S_{\beta+b+2,1}
\]
corresponds to a map
\[
  {\textstyle\bigwedge}^{b+2}W \otimes S_{\beta,0}\otimes S_{0,0}^*
  \longrightarrow S_{\beta+b+2,1}.
\]
We ignore $S_{0,0}^* \simeq k$.  Using the definition of
$\delta'_{\beta}$, this map is given by
\[
  f_0\wedge \cdots \wedge f_{b+1} \otimes h \longmapsto
  \sum_{i = 0}^{b+1} (-1)^i f_i\,h \, \delta'(f_0\wedge \cdots
  \widehat{f_i} \cdots \wedge f_{b+1})
\]
This reduces to zero (factor out $h \in S_{\beta,0}$ and use
\eqref{deltaidentity}), so $d^0\circ \delta'_{\beta} = 0$.

If $\beta > 0$, we need to consider $\delta'_{\beta} \circ
d^{-2}$.  Arguing as above, this map is determined by
\[
  {\textstyle\bigwedge}^{b+2}W \otimes S_{\beta-1,0}\otimes S_{0,1}^*
  \longrightarrow S_{\beta+b+1,0},
\]
which in turn is determined by the map
\[
  {\textstyle\bigwedge}^{b+2}W \otimes S_{\beta-1,0}
  \longrightarrow S_{\beta+b+1,0}\otimes S_{0,1} = S_{\beta+b+1,1}
\]
given by
\[
  f_0\wedge \cdots \wedge f_{b+1} \otimes h \longmapsto
  \sum_{i = 0}^{b+1} (-1)^i f_i\,h \, \delta'(f_0\wedge \cdots
  \widehat{f_i} \cdots \wedge f_{b+1})
\]
for $h \in S_{\beta-1,0}$.  As above, this reduces to zero, so that
$\delta'_{\beta} \circ d^{-2} = 0$.

When $\beta = 0$, we have to show that the composition
\[
  \xymatrix@C=10pt@R=10pt{ \ed(a+b+2)\otimes S^*_{0,a+1}
  \ar[dr]^{\delta_0^*} && \\ & \ed(a+1)\otimes S_{0,0}\otimes
  S_{0,0}^* \ar[dr]^(.6){\delta'_{0}}\\ & & \widehat{E}(0)\otimes
  S_{b+1,0}}
\]
is zero.  By \eqref{canonical}, the composed map corresponds to a map
\[
  {\textstyle\bigwedge}^{a+b+2}W \otimes S_{0,a+1}^* \longrightarrow
  S_{b+1,0},
\]
which in turn is determined by the a map
\[
  {\textstyle\bigwedge}^{a+b+2}W  \longrightarrow
  S_{b+1,0}\otimes S_{0,a+1} = S_{b+1,a+1}.
\]
Given $f_0,\dots,f_{a+b+1} \in W$, this map is given by
\begin{equation}
\label{beta0}
f_0 \wedge \cdots \wedge f_{a+b+1} \mapsto \sum_{|S| = a+1}
\varepsilon(S)\, \delta(\mathbf{f}_S)\, \delta'(\mathbf{f}_{S^c}),
\end{equation}
where the sum is over all subsets $S \subset \{0,\dots,a+b+1\}$ of
cardinality $a+1$ and $S^c = \{0,\dots,a+b+1\} \setminus S$.
Furthermore,
\begin{align*}
\delta(\mathbf{f}_S) &= \delta\big({\textstyle\bigwedge}_{i \in S}
f_i\big)\\
\delta'(\mathbf{f}_{S^c}) &= \delta'\big({\textstyle\bigwedge}_{i \in S^c}
f_i\big),
\end{align*}
and $\varepsilon(S) = \pm1$ is the sign that appears in the
Laplace expansion described below.

To show that the sum in \eqref{beta0} is zero, write $f_i =
\sum_{j=0}^a \ell_{ij}x_j = \sum_{j=0}^b \ell_{ij}'y_j$ and consider
the matrix
\[
\mathcal{M} = \begin{pmatrix}
\ell_{0,0} & \cdots & \ell_{a+b+1,0}\\
\vdots && \vdots\\
\ell_{0,a} & \cdots & \ell_{a+b+1,a}\\
\ell_{0,0}' & \cdots & \ell_{a+b+1,0}'\\
\vdots && \vdots\\
\ell_{0,b}' & \cdots & \ell_{a+b+1,b}'
\end{pmatrix}.
\]
If we multiply first $a+1$ rows by suitable $\mathbf{x}$ variables
and multiply the last $b+1$ rows by $\mathbf{y}$ variables, we get the
same result, namely the row $(f_0,\dots,f_{a+b+1})$.  If follows that
$\det \mathcal{M} = 0$.  If we take the Laplace expansion that
involves $(a+1)\times(a+1)$ minors of the first $a+1$ rows multiplied
by $(b+1)\times(b+1)$ complementary minors of the last $b+1$ rows, we
get the sum in \eqref{beta0}.  Hence this sum is zero, which proves
that $\delta'_{0} \circ \delta^*_{0} = 0$.

To complete the proof that $\delta'_{\beta}$ gives the diagonal
map in $T^{-1}(\Fc) \to T^0(\Fc)$, we follow the strategy used in
\cite[Thm.\ 1.3]{Cox_bez}.  Let $N' = (a+1)(b+1) = \dim(W)$.  Since
$\ed \simeq E(-N')$ and $T^{-1}(\Fc) \to T^0(\Fc) \to T^{1}(\Fc)$ is
\[
  \widehat{E}(b+1)\otimes
  S_{\beta,0}\otimes S_{0,0}^* \longrightarrow
  \widehat{E}(0)\otimes S_{\beta+b+1,0} \xrightarrow{d^0}
  \widehat{E}(-1)\otimes S_{\beta+b+2,1},
\]
the kernel of $d^0$ has $\dim(S_{\beta,0}\otimes S_{0,0}^*)$
minimal generators of degree $N'-b-1$.  Since we have proved that
$\delta'_\beta$ maps into this kernel, it suffices to prove that
this map is injective in degree $N'-b-1$, i.e., that
\[
  \delta'_\beta : {\textstyle\bigwedge}^{N'} W \otimes
  S_{\beta,0} \longrightarrow
  {\textstyle\bigwedge}^{N'-b-1} W \otimes S_{\beta+b+1,0}
\]
is injective (as above, we ignore $S_{0,0}^*$).  A basis of
${\textstyle\bigwedge}^{N'} W$ is given by $x_0y_0\wedge \cdots \wedge
x_0y_b \wedge \omega$, where $\omega$ is the wedge product of the
remaining $N'-b-1$ monomials of $W$ in some order.  Since
\[
  \delta'(x_0y_0\wedge \cdots \wedge x_0y_b) = x_0^{b+1},
\]
we see that for $h \in S_{\beta,0}$,
\[
  \delta'_\beta(x_0y_0\wedge \cdots \wedge x_0y_b \wedge \omega
  \otimes h) = \omega \otimes x_0^{b+1} h + \cdots \in
  {\textstyle\bigwedge}^{N'-b-1} W \otimes S_{\beta+b+1,0},
\]
where the omitted terms involve basis elements of $\bigwedge^{N'-b-1} W$
different from $\omega$.  The desired injectivity is now obvious.

This completes the proof for $d^{p^+ - 1}_{b,0}$ in a Type~2 Tate
resolution when $l = 0$ and $k = \beta + b + 1$.
The proof for arbitrary $l$ is similar, and
the same proof easily adapts to $d^{p^+ - 1}_{a,0}$ in a Type~3 Tate
resolution.  As for $d^{p^-}$, we note that applying
$\mathrm{Hom}_E(-,K)\otimes_K \ed$ to $T^p(\Fc)$ gives
$T^{a+b-p}(\Gc)$, where $\Gc = \nu_*\Oc_X(-a-1-k,-b-1-l)$.  This
duality interchanges Type~2 and Type~3 resolutions.  Then our results
for $d^{p^+ - 1}_{b,0}$ and $d^{p^+ - 1}_{a,0}$ and dualize to give
the desired results for $d^{p^-}_{a+b,a}$ and $d^{p^-}_{a+b,b}$.

It remains to consider Type 1 Tate resolutions.  This case will be
more complicated since there are two sets the variables to keep track
of:\ the original variables $\mathbf{x},\mathbf{y}$ and the duplicates
$\mathbf{X},\mathbf{Y}$ introduced in Section~\ref{diagonal1}.

Let $\alpha = k+p+1$ and $\beta = k+p+1$.  We will show that the
crucial part of $T^{p}(\Fc) \to T^{p+1}(\Fc) \to T^{p+2}(\Fc)$ can be
chosen to be
\[
  \xymatrix@C=13pt@R=10pt{
  \ed(a\!+\!b\!-\!p)\otimes S^*_{b\!-\!\alpha,a\!-\!\beta}
  \ar[r]^(.4){d^{p}_{a+b,a+b}}
  \ar[ddr]^{\ (-1)^p J_{\alpha,\beta}} & \ed(a\!+\!b\!-\!p\!-\!1)\otimes
  S^*_{b\!-\!\alpha\!-\!1,a\!-\!\beta\!-\!1}
  \ar[ddr]^{\ \ \ \ (-1)^{p+1} J_{\alpha+1,\beta+1}} & \\
  & {\textstyle\bigoplus} & \\
  &  \ed(-\!p\!-\!1)\otimes S_{\alpha,\beta} \ar[r]^(.47){d^{p+1}_{0,0}}
  & \widehat{E}(-\!p\!-\!2)\otimes S_{\alpha\!+\!1,\beta\!+\!1}}.
\]
This first step is to show that this is a complex, i.e., the
composition $T^{p}(\Fc) \to T^{p+1}(\Fc) \to T^{p+2}(\Fc)$ is zero.
Since the horizontal maps behave properly, it suffices to show that
\begin{equation}
\label{toshowtype1}
d^{p+1}_{0,0} \circ J_{\alpha,\beta} = J_{\alpha+1,\beta+1} \circ
d^{p}_{a+b,a+b}.
\end{equation}
Using \eqref{canonical}, this is equivalent to showing that the
diagram
\[
\xymatrix{ {\textstyle\bigwedge}^{a+b+2} W \otimes
  S^*_{b-\alpha,a-\beta} \ar[r]^(.45){d^{p}_{a+b,a+b}}
  \ar[d]^{J_{\alpha,\beta}}
  & {\textstyle\bigwedge}^{a+b+1}
  W \otimes S^*_{b-\alpha-1,a-\beta-1}
  \ar[d]^{J_{\alpha+1,\beta+1}} \\
  W  \otimes S_{\alpha,\beta} \ar[r]^{d^{p+1}_{0,0}}
  & S_{\alpha+1,\beta+1}}
\]
commutes.  A key point is that on the top, $d^{p}_{a+b,a+b}$ uses
$\mathbf{X},\mathbf{Y}$, while on the bottom, $d^{p+1}_{0,0}$ uses
$\mathbf{x},\mathbf{y}$.  We can recast the commutivity of this
diagram as saying that $d^{p+1}_{0,0} \circ J_{\alpha,\beta} =
J_{\alpha+1,\beta+1} \circ d^{p}_{a+b,a+b}$ as maps
\[
  {\textstyle\bigwedge}^{a+b+2} W \longrightarrow
  \underbrace{S_{b-\alpha,a-\beta}}_{\mathbf{X},\mathbf{Y}} \otimes
  \underbrace{S_{\alpha+1,\beta+1}}_{\mathbf{x},\mathbf{y}}.
\]

Given $a+b+2$ elements of $W$, we write them as $f_0,\dots,f_{a+b+1}$
when using $\mathbf{x},\mathbf{y}$ and as $F_0,\dots,F_{a+b+1}$
when using $\mathbf{X},\mathbf{Y}$.  Then \eqref{toshowtype1} is
equivalent to the identity
\[
\sum_{i=0}^{a+b+1}\!\! (-1)^i f_i J_{\alpha,\beta}(f_0\wedge \cdots\!
\widehat{f_i} \cdots \wedge f_{a+b+1})\!
=\!\! \sum_{i=0}^{a+b+1}\!\! (-1)^i F_i J_{\alpha+1,\beta+1}(f_0\wedge \cdots\!
\widehat{f_i} \cdots \wedge f_{a+b+1})
\]
in $S_{b-\alpha,a-\beta}\otimes S_{\alpha+1,\beta+1}$.  Summing this
over all $\alpha$ and $\beta$ gives the second identity
\[
\sum_{i=0}^{a+b+1} (-1)^i f_i\, \widetilde{J}(f_0\wedge\cdots
\widehat{f_i} \cdots \wedge f_{a+b+1})
= \sum_{i=0}^{a+b+1} (-1)^i F_i\, \widetilde{J}(f_0\wedge \cdots
\widehat{f_i} \cdots \wedge f_{a+b+1}),
\]
and the first identity follows from the second by taking the
appropriate graded piece.  However,
\begin{itemize}
\item The change of variables $(\mathbf{x},\mathbf{y}) \leftrightarrow
(\mathbf{X},\mathbf{Y})$ interchanges $f_i$ and $F_i$.
\item $\widetilde{J}(f_0\wedge \cdots \widehat{f_i} \cdots \wedge f_{a+b+1})$
is invariant under $(\mathbf{x},\mathbf{y}) \leftrightarrow
(\mathbf{X},\mathbf{Y})$.
\end{itemize}
It follows that the second identity is equivalent to the assertion that
\begin{equation}
\label{invariant}
\sum_{i=0}^{a+b+1} (-1)^i f_i\, \widetilde{J}(f_0\wedge \cdots
\widehat{f_i} \cdots \wedge f_{a+b+1})\ \text{is invariant under}\
(\mathbf{x},\mathbf{y}) \leftrightarrow (\mathbf{X},\mathbf{Y}).
\end{equation}
In particular, \eqref{toshowtype1} is an immediate consequence of
\eqref{invariant}.

We will prove \eqref{invariant} by representing $\sum_{i=0}^{a+b+1}
(-1)^i f_i\, \widetilde{J}(f_0\wedge \cdots \widehat{f_i} \cdots
\wedge f_{a+b+1})$ as a determinant.  We begin with the formula
\[
  J(f_0\wedge \cdots \wedge f_{a+b}) = \frac{1}{y_b} \det\begin{pmatrix}
  \frac{\partial f_0}{\partial x_0} & \!\!\cdots \!\!& \frac{\partial
  f_{a+b}}{\partial x_0}\\
  \vdots & & \vdots\\
  \frac{\partial f_0}{\partial x_a} & \!\!\cdots\!\! & \frac{\partial
  f_{a+b}}{\partial x_a}\\
  \frac{\partial f_0}{\partial y_0} & \!\!\cdots\!\! & \frac{\partial
  f_{a+b}}{\partial y_0}\\
  \vdots & & \vdots\\
  \frac{\partial f_0}{\partial y_{b-1}} & \!\!\cdots\!\! & \frac{\partial
  f_{a+b}}{\partial y_{b-1}}
  \end{pmatrix},
\]
which follows from the proof of Lemma~\ref{torictree}.  This implies
\[
  \widetilde{J}(f_0\wedge \cdots \wedge f_{a+b}) =
  \frac{1}{Y_b+y_b} \det\begin{pmatrix}
  \widetilde{\frac{\partial f_0}{\partial x_0}} & \!\!\cdots \!\!&
  \widetilde{\frac{\partial f_{a+b}}{\partial x_0}}\\
  \vdots & & \vdots\\
  \widetilde{\frac{\partial f_0}{\partial x_a}} & \!\!\cdots\!\! &
  \widetilde{\frac{\partial f_{a+b}}{\partial x_a}}\\
  \widetilde{\frac{\partial f_0}{\partial y_0}} & \!\!\cdots\!\! &
  \widetilde{\frac{\partial f_{a+b}}{\partial y_0}}\\
  \vdots & & \vdots\\
  \widetilde{\frac{\partial f_0}{\partial y_{b-1}}} & \!\!\cdots\!\! &
  \widetilde{\frac{\partial f_{a+b}}{\partial y_{b-1}}}
  \end{pmatrix}.
\]
It follows easily that
\[
\sum_{i=0}^{a+b+1}
(-1)^i f_i\, \widetilde{J}(f_0\wedge \cdots \widehat{f_i} \cdots
\wedge f_{a+b+1}) = \frac{1}{Y_b+y_b} \det \mathbf{M},
\]
where $\mathbf{M}$ is the $(a+b+2)\times(a+b+2)$ matrix
\[
\mathbf{M} = \begin{pmatrix} f_0 & \!\!\cdots \!\!& f_{a+b+1}\\
  \widetilde{\frac{\partial f_0}{\partial x_0}} & \!\!\cdots \!\!&
  \widetilde{\frac{\partial f_{a+b+1}}{\partial x_0}}\\
  \vdots & & \vdots\\
  \widetilde{\frac{\partial f_0}{\partial x_a}} & \!\!\cdots\!\! &
  \widetilde{\frac{\partial f_{a+b+1}}{\partial x_a}}\\
  \widetilde{\frac{\partial f_0}{\partial y_0}} & \!\!\cdots\!\! &
  \widetilde{\frac{\partial f_{a+b+1}}{\partial y_0}}\\
  \vdots & & \vdots\\
  \widetilde{\frac{\partial f_0}{\partial y_{b-1}}} & \!\!\cdots\!\! &
  \widetilde{\frac{\partial f_{a+b+1}}{\partial y_{b-1}}}
  \end{pmatrix}.
\]
To prove \eqref{invariant}, it suffices to show that $\det \mathbf{M}$
is unchanged when we replace its top row with $(F_0,\dots,F_{a+b+1})$.
For this purpose, consider the $(a+b+3)\times(a+b+3)$ matrix
\[
\overline{\mathbf{M}} = \begin{pmatrix}
&&&0\\
& \mathbf{M} && \vdots\\
&&&0\\
\widetilde{\frac{\partial f_0}{\partial y_{b}}} & \cdots &
  \widetilde{\frac{\partial f_{a+b+1}}{\partial y_{b}}} &1
\end{pmatrix}
\]
and observe that $\det \mathbf{M} = \det \overline{\mathbf{M}}$.
Write $\overline{\mathbf{M}}$ as
\[
\overline{\mathbf{M}} = \begin{pmatrix}
f_0 & \cdots & f_{a+b+1} & 0\\
&&&0\\
& \widetilde{Q} && \vdots\\
&&&1\\
\end{pmatrix}.
\]

Since $f_\ell \in W = S_{1,1}$, we have the easily proved identity
\[
F_\ell - f_\ell = - \sum_{i=0}^a x_i \widetilde{\frac{\partial
    f_\ell}{\partial x_{i}}} + \sum_{j=0}^b Y_j \widetilde{\frac{\partial
    f_\ell}{\partial y_{j}}}.
\]
Multiplying the last $a+b+2$ rows of $\overline{\mathbf{M}}$ by $-x_i$ or
$Y_j$ as appropriate and adding to the first row gives the matrix
\[
\overline{\mathbf{M}}' = \begin{pmatrix}
F_0& \cdots & F_{a+b+1} & Y_b\\
&&&0\\
& \widetilde{Q}  && \vdots\\
&&&1
\end{pmatrix}.
\]
Note that $\det\overline{\mathbf{M}}' = \det\overline{\mathbf{M}}$.
This is \emph{almost} what we need, except for the $Y_b$ in the first
row of $\overline{\mathbf{M}}'$.

We claim that $\det \widetilde{Q} = 0$.  Assuming this for the moment,
it follows that we can replace $Y_b$ with $0$ in
$\overline{\mathbf{M}}'$ without changing its determinant.  This
easily implies $\det \mathbf{M}$
is unchanged when we replace its top row with
$(F_0,\dots,F_{a+b+1})$ and will complete the proof of
\eqref{toshowtype1}.

It remains to study $\det \widetilde{Q}$.  The matrix $\widetilde{Q}$
is obtained from
\[
Q = \begin{pmatrix}
  \frac{\partial f_0}{\partial x_0} & \!\!\cdots \!\!&
  \frac{\partial f_{a+b}}{\partial x_0}\\
  \vdots & & \vdots\\
  \frac{\partial f_0}{\partial x_a} & \!\!\cdots\!\! &
  \frac{\partial f_{a+b}}{\partial x_a}\\
  \frac{\partial f_0}{\partial y_0} & \!\!\cdots\!\! &
  \frac{\partial f_{a+b}}{\partial y_0}\\
  \vdots & & \vdots\\
  \frac{\partial f_0}{\partial y_{b}} & \!\!\cdots\!\! &
  \frac{\partial f_{a+b}}{\partial y_{b}}
  \end{pmatrix}
\]
by the $F \mapsto \widetilde{F}$ operation described in
Section~\ref{diagonal1}.  But $\det Q = 0$ since $f_\ell =
\sum_{i=0}^a x_i \frac{\partial f_\ell}{\partial x_i} =
\sum_{j=0}^b y_j \frac{\partial f_\ell}{\partial y_j}$, and then
\[
\det \widetilde{Q} = \widetilde{\det Q} = 0.
\]

Hence we have proved that the maps $T^p(\Fc) \to T^{p+1}(\Fc)$ defined
using $(-1)^p J_{\alpha,\beta}$ give a complex.  To show that the
complex is exact, we again use the strategy of \cite[Thm.\
1.3]{Cox_bez}.  Lemma~\ref{type1lemma} tells us that $p^+ =
\min\{b-k,a-l\}$.  For simplicity, we assume $b-k \le a-l$, so that
$p^+ = b-k$.  Let $\beta = b-k+l$ and $p = p^+-1$.  Type 1 and $b-k
\le a-l$ imply $0 \le \beta \le a$.  Then $T^{p}(\Fc) \to
T^{p+1}(\Fc)$ becomes
\begin{equation}
\label{hardcase}
\begin{array}{c}  \xymatrix@C=20pt@R=8pt{
  \widehat{E}(a+k+1)\otimes
  S_{0,a-\beta}^* \ar[ddr]^{\ (-1)^{p}J_{b,\beta}} &  \\
  \bigoplus &  \\
  \widehat{E}(k-b+1) \otimes
  S_{b-1,\beta-1} \ar[r]^(.58){d^p_{0,0}} &
  \widehat{E}(k-b)\otimes
  S_{b,\beta}.
  }\end{array}
\end{equation}
Let $N' = (a+1)(b+1) = \dim(W)$.
Then the shape of the Tate resolution
tells us that there are $\dim(S_{b-1,\beta-1})$ minimal generators of
degree $N'-(k-b+1)$ and $\dim(S_{0,a-\beta}^*)$ minimal generators of
degree $N'-(a+k+1)$.  The former are taken care of by the known map
$d^p_{0,0}$, and for the latter, we see that in degree 
$N'-(a+k+1)$, the above diagram becomes
\begin{equation}
\label{equivar}
\begin{array}{c}
  \xymatrix@C=25pt@R=8pt{ {\textstyle\bigwedge}^{N'} W \otimes
  S_{0,a-\beta}^* \ar[ddr]^{\ (-1)^{p}J_{b,\beta}} & \\ \bigoplus & \\
  {\textstyle\bigwedge}^{N'-a-b} W \otimes S_{b-1,\beta-1}
  \ar[r]^{d^p_{0,0}} & {\textstyle\bigwedge}^{N'-a-b-1} W
  \otimes S_{b,\beta}.  }\end{array}
\end{equation}
As in \cite[Lem.\ 2.2]{Cox_bez}, we need to show that
$(-1)^{p}J_{b,\beta}$ is injective and that its image has trivial
intersection with the image of $d^p_{0,0}$.

For the former, let $\theta \in \bigwedge^{N'} W$ be the wedge product of
the monomials in $W$ in some order, and let $\varphi \in
S_{0,a-\beta}^*$ satisfy $J_{b,\beta}(\theta\otimes\varphi) = 0$.
Suppose that $\mathbf{Y}^u$ is a monomial in the $\mathbf{Y}$
variables of degree $|u| = a-\beta$.  We prove $\varphi(\mathbf{Y}^u)
= 0$ as follows.

Pick $\mathbf{Y}^v$ such that $\mathbf{Y}^u | \mathbf{Y}^v$ and $|v| =
a$, and write
\[
  \mathbf{Y}^v = Y_{j_1} \cdots Y_{j_a}.
\]
Then consider the following collection $f_0,\dots,f_{a+b}$ of
monomials in $W = S_{1,1}$:
\[
  x_0y_j,\ j = 0,\dots,b\ \ \text{and}\ \  x_i y_{j_i},\ i = 1,\dots,a.
\]
The graph of these monomials (in the sense of Section~\ref{diagonal1})
is easily seen to be a tree.  Then Lemma~\ref{torictree} implies that
\[
  J(f_0\wedge\cdots\wedge f_{a+b}) = \pm \frac{\prod_{j=0}^b x_0 y_j
  \prod_{i=1}^a x_i y_{j_i}}{\prod_{i=0}^a x_i \prod_{j=0}^b y_j} =
  \pm x_0^b\, {\textstyle\prod_{i=1}^a} y_{j_i} = \pm x_0^b\,
  \mathbf{y}^v.
\]
Thus $\widetilde{J}(f_0\wedge\cdots\wedge f_{a+b}) = \pm (X_0+x_0)^b
(\mathbf{Y}+\mathbf{y})^v$.  Taking those terms of degree $(b,\beta)$
in $(\mathbf{x},\mathbf{y})$, we obtain
\[
  J_{b,\beta}(f_0\wedge\cdots\wedge f_{a+b}) = \pm {\textstyle\sum_{w}}
  {\textstyle\binom{v}{w}} \, x_0^b \, \mathbf{Y}^{v-w} \,
  \mathbf{y}^w,
\]
where $\binom{v}{w} = \prod_{j=0}^b \binom{v_j}{w_j}$ and $\sum_w$
denotes the sum over all exponent vectors $w$ satisfying $|w| = \beta$
and $0 \le w_j \le v_j$ for all $j$.  Writing $\theta =
f_0\wedge\cdots\wedge f_{a+b} \wedge \omega$, we obtain
\begin{align*}
  0 &= J_{b,\beta}(f_0\wedge\cdots\wedge f_{a+b} \wedge \omega \otimes
  \varphi)\\
  &= \omega\otimes \varphi\big(J_{b,\beta}(f_0\wedge\cdots\wedge
  f_{a+b})\big)  + \cdots\\
  &= \omega\otimes \Big(\pm{\textstyle\sum_w \binom{v}{w}} \,
  \varphi\big(\mathbf{Y}^{v-w}\big) \, x_0^b \,\mathbf{y}^w\Big) +
  \cdots,
\end{align*}
where the omitted terms involve basis elements of $\bigwedge^{N'-a-b-1}
W$ different from $\omega$.  Since we are in characteristic $0$, it
follows that $\varphi(\mathbf{Y}^{v-w}) = 0$ for all $w$ under
consideration.  Our choice of $v$ guarantees that our original
monomial $\mathbf{Y}^u$ is one of these $\mathbf{Y}^{v-w}$'s.  Hence
$\varphi(\mathbf{Y}^{u}) = 0$, which implies $\varphi = 0$ since
$\mathbf{Y}^u$ was an arbitrary monomial of degree $a-\beta$.  This
completes the proof $(-1)^{p}J_{b,\beta}$ is injective.

It remains to show that the image of this map has trivial intersection
with the image of $d^p_{0,0}$. Following a suggestion of Jenia 
Tevelev, we use representation theory to finish the proof. 

Recall that there is a natural isomorphism $W = S_{1,1} \cong W_1
\otimes W_2$, where $W_1 = S_{1,0} = \C^{a+1}$ and $W_2 = S_{0,1} =
\C^{b+1}$.  First, we show that an action of the group $G =
\mathrm{SL}(W_1)\times \mathrm{SL}(W_2)$ on the diagram
(\ref{equivar}) is $G$-invariant on the maps $d_{0,0}^p$ and
$(-1)^{p}J_{b,\beta}$.  Indeed, since the map $d_{0,0}^p$ is induced
by the multiplication map
\[
  W\otimes S_{b-1,\beta-1}\rightarrow S_{b,\beta},
\]
we conclude that $d_{0,0}^p$ is $G$-invariant.  Now observe that the
toric Jacobian can be written as a linear combination of monomials
\begin{eqnarray*}
  J(f_0,\ldots,f_{a+b}) = \sum_{\mu,\nu}
  c_{\mu,\nu} x^{\mu} y^{\nu},
\end{eqnarray*}
where $c_{\mu,\nu}$ are the entries of the square matrix whose
determinant is a hyperdeterminant (see \cite[p.~473]{GKZ}).  By
\cite[Proposition~1.4]{Hyperdeterminants}, the hyperdeterminant is
$G$-invariant, so the toric Jacobian (\ref{toric_Jac}) (and
respectively the map $(-1)^p J_{b,\beta}$) is $G$-invariant.

It follows from Schur's Lemma that the images of $d_{0,0}^p$ and
$J_{b,\beta}$ have trivial intersection if the representation of $G$
corresponding to
\begin{equation}
\label{todecompose}
{\textstyle\bigwedge}^{N'-a-b} W \otimes S_{b-1,\beta-1} = 
{\textstyle\bigwedge}^{ab+1}(W_1\otimes W_2) \otimes
\textrm{Sym}^{b-1}(W_1)\otimes \textrm{Sym}^{\beta-1}(W_2)
\end{equation}
doesn't contain the representation corresponding to 
\[
{\textstyle\bigwedge}^{N'} W \otimes S_{0,a-\beta}^* = 
{\textstyle\bigwedge}^{ab+a+b+1}(W_1\otimes W_2) \otimes
\textrm{Sym}^{a-\beta}(W_2^*).
\]
To prove this, we use some basic facts from the representation theory
of the special linear group (see, e.g., \cite[\S6.1 and
\S15.3]{Fulton-Harris}).  Given a partition $\lambda = (\lambda_1,
\dots, \lambda_s)$, $\lambda_1 \ge \cdots \ge \lambda_s\ge 0$, we get
a \emph{Young diagram} $D_\lambda$, which consists of $s$ rows of
boxes, all starting at the same column, of lengths $\lambda_1 \ge
\cdots \ge \lambda_s$.

For a vector space $V$ over $K$, $\Sb_{\lambda}(V)$ denotes the
irreducible $\mathrm{SL}(V)$-representation corresponding to partition
$\lambda$.  We use notation $\lambda = (d_1^{a_1},\ldots, d_\ell^{a_\ell})$
to denote the partition having $a_i$ copies of the integer $d_i$,
$1\le i \le \ell$.  The corresponding Young diagram $D_\lambda$ has $a_i$
rows of boxes of length $d_i$.  Thus $\lambda = (d)$ gives the
symmetric product $\Sb_{\lambda}(V) = \textrm{Sym}^d(V)$ and $\lambda =
(1^d)$ gives the exterior product $\Sb_{\lambda}(V) = \bigwedge^d V$.  

Recall that $\Sb_{\lambda}(V) = 0$ when the Young diagram of $\lambda$
has more than $\dim V$ nonzero rows, and that two Young diagrams give
the same $\mathrm{SL}(V)$-representation if and only if one can be
obtained from the other by adding or deleting columns of height $\dim
V$ at the beginning of the Young diagram.  

By the Cauchy formula \cite[\S6.1]{Fulton-Harris}, we have the
following decomposition for the exterior powers of $W = W_1 \otimes
W_2$:
\begin{eqnarray*}
  {\textstyle\bigwedge}^{ab + 1} W = 
  {\textstyle\bigwedge}^{ab + 1}(W_1\otimes W_2) = 
  \bigoplus_{|\lambda| = ab + 1} \Sb_{\lambda}(W_1)\otimes
  \Sb_{\lambda'}(W_2),
\end{eqnarray*}
where the direct sum runs over all partitions $\lambda$ of $ab + 1$ with at
most $\dim W_1 = a+1$ rows, at most $\dim W_2 = b+1$ columns, and
$\lambda'$ is the conjugate partition to $\lambda$.  Note that the
representation corresponding to the highest power of determinant
${\textstyle\bigwedge}^{N'} W$ is one-dimensional, i.e., is a trivial
representation.

When we combine this with \eqref{todecompose}, we see that it is
enough to show that it cannot happen simultaneously that
$\Sb_{\lambda}(W_1)\otimes \textrm{Sym}^{b-1}(W_1)$ contains the
trivial representation and $\Sb_{\lambda'}(W_2)\otimes
\textrm{Sym}^{\beta-1}(W_2)$ contains $\textrm{Sym}^{a-\beta}(W_2^*)$.
Since $\dim W_1 = a+1$ and $\dim W_2 = b+1$, we can assume that the
Young diagram of $\lambda$ has at most $a+1$ rows (otherwise
$\Sb_{\lambda}(W_1) = 0$) and at most $b+1$ columns (otherwise
$\Sb_{\lambda'}(W_2) =0$).

By the Pieri formula \cite[(6.8)]{Fulton-Harris}, for any partition
$\lambda$, we have
\begin{eqnarray*}
  \Sb_{\lambda}(W_1)\otimes \mathrm{Sym}^{b-1}(W_1) 
  \cong \bigoplus_{\nu}\Sb_{\nu}(W_1),
\end{eqnarray*}
where the sum is over all $\nu$ whose Young diagram is obtained by
adding $b-1$ boxes to the Young diagram of $\lambda$, with no two
boxes in the same column.  Note also that each $\nu$ is a partition
of $(ab + 1) + (b-1) = (a+1)b$.  Since $D_\lambda$ has $|\lambda| =
ab+1$ boxes and fits inside a $(a+1)\times(b+1)$ rectangle, the only
way for $\nu$ to give the trivial representation is for $D_\lambda$ to
be the Young diagram:
\[
\begin{picture}(200,150)
\put(3,75){$\lambda \ = $}
\thicklines
\put(30,140){\line(1,0){150}}
\put(30,30){\line(1,0){150}}
\put(180,30){\line(0,1){110}}
\put(30,10){\line(0,1){130}}
\put(30,10){\line(1,0){20}}
\put(50,10){\line(0,1){20}}
\put(102,143){$b$}
\put(115,145){\vector(1,0){64}}
\put(92,145){\vector(-1,0){62}}
\put(184,82.5){$a$}
\put(186,76){\vector(0,-1){46}}
\put(186,94){\vector(0,1){46}}
\dashline{3}(50,10)(180,10)(180,30)
\dashline{3}(70,10)(70,30)
\dashline{3}(90,10)(90,30)
\dashline{3}(160,10)(160,30)
\dashline{3}(140,10)(140,30)
\end{picture}
\]
You can see how adding $b-1$ boxes to the bottom row (the dashed boxes
in the drawing) give the trivial representation, since $D_\nu$ is
trivial if and only if it consists entirely of columns of height $a+1$.

This shows that the only case when $\Sb_{\lambda}(W_1)\otimes
\textrm{Sym}^{b-1}(W_1)$ contains the trivial representation is when
$\lambda = (b^a,1)$.  Hence, $\lambda'$ must be $(a+1,a^{b-1})$.  On
the other hand, $\textrm{Sym}^{a-\beta}(W_2^*)$ corresponds to the
partition $(b^{a-\beta})$ (see \cite[\S15.5, Exercise
15.50]{Fulton-Harris}), so from the Pieri formula we see that that it
is impossible to get $(b^{a-\beta})$ from the tensor product
$\Sb_{\lambda'}(W_2)\otimes \textrm{Sym}^{\beta-1}(W_2)$ by adding
$\beta-1$ boxes to $(a+1,a^{b-1})$, no two in the same column, and
then deleting columns of height $b+1$.

The final step is to prove exactness when $T^p \to T^{p+1}$ is given by
\[
\xymatrix@C=20pt@R=8pt{
  \widehat{E}(a+k+1)\otimes
  S_{b-\alpha,a-\beta}^* \ar[ddr]^{\ (-1)^{p}J_{\alpha,\beta}} 
  \ar[r]^(.465){d^p_{a+b,a+b}} &  
  \widehat{E}(a+k) \otimes S_{b-\alpha-1,a-\beta-1}^*\\
  \bigoplus & \bigoplus \\
  \widehat{E}(k-b+1) \otimes
  S_{\alpha-1,\beta-1} \ar[r]^(.58){d^p_{0,0}} &
  \widehat{E}(k-b)\otimes
  S_{\alpha,\beta}.
  }
\]
Here, we use the same conventions as in \eqref{hardcase}, except that
we now assume that $b-\alpha$ and $a-\beta$ are positive.  As before,
the shape of the Tate resolution tells us that there are
$\dim(S_{\alpha-1,\beta-1})$ minimal generators of degree $N'-(k-b+1)$
and $\dim(S_{b-\alpha,a-\beta}^*)$ minimal generators of degree
$N'-(a+k+1)$.  The former are taken care of by the known map
$d^p_{0,0}$, and for the latter, we see that in degree $N'-(a+k+1)$,
the above diagram becomes
\begin{equation}
\label{easycase}
\begin{array}{c}
  \xymatrix@C=25pt@R=8pt{ {\textstyle\bigwedge}^{N'} W \otimes
  S_{b-\alpha,a-\beta}^* \ar[ddr]^{\ (-1)^{p}J_{\alpha,\beta}} 
  \ar[r]^(.465){d^p_{a+b,a+b}} & 
   {\textstyle\bigwedge}^{N'-1}W\otimes S_{b-\alpha-1,a-\beta-1}^*\\ 
  \bigoplus & \bigoplus \\
  {\textstyle\bigwedge}^{N'-a-b} W \otimes S_{\alpha-1,\beta-1}
  \ar[r]^{d^p_{0,0}} & {\textstyle\bigwedge}^{N'-a-b-1} W
  \otimes S_{\alpha,\beta}.  }\end{array}
\end{equation}
The map $d^p_{a+b,a+b}$ is injective since it is dual to the
surjective multiplication map $W \otimes S_{b-\alpha-1,a-\beta-1} \to
S_{b-\alpha,a-\beta}$.  As in the proof of \cite[Thm.\ 1.3]{Cox_bez},
it follows immediately that the map \eqref{easycase} is injective on
$\bigwedge^{N'} W \otimes S_{b-\alpha,a-\beta}^*$ and that the images
of $\bigwedge^{N'} W \otimes S_{b-\alpha,a-\beta}^*$ and
$\bigwedge^{N'-a-b} W \otimes S_{\alpha-1,\beta-1}$ have trivial
intersection.  This completes the proof of the theorem.
\end{proof}

\begin{remark}
In the proof of Theorem~\ref{mainthm}, we used the relation between
the toric Jacobian of $f_0,\dots,f_{a+b} \in S_{1,1}$ and the
hyperdeterminants studied in \cite{GKZ,Hyperdeterminants} prove the
equivariance we needed.  The theorem implies that certain
hyperdeterminants are explicitly encoded into the Tate resolutions
resolutions considered here.  This is another example of the amazing
amount of information contained in these resolutions.
\end{remark}


\section*{Acknowledgements}

E. Materov was partially supported by the Russian Foundation for Basic
Research, grant 05-01-00517, by grant 06-01-91063 from the Japanese
Society for the Promotion of Science and the Russian Foundation for
Basic Research, and by NM project 45.2007 of the Siberian Federal
University grant.  We are very grateful to Jenia Tevelev for
discussions about Young diagrams and representation theory.  We would
like to thank Rob Benedetto for a helpful suggestion in the proof of
Theorem~\ref{mainthm}.

\end{document}